\providecommand\@dotsep{5}
\def\listtodoname{List of Todos}
\def\listoftodos{\@starttoc{tdo}\listtodoname}
\numberwithin{equation}{section}
\newtheorem{theorem}{Theorem}[section]
\newtheorem{prop}[theorem]{Proposition}
\newtheorem{lem}[theorem]{Lemma}
\newtheorem{cor}[theorem]{Corollary}
\newtheorem{rem}{Remark}
\newtheorem{definition}{Definition}
\newcommand\restr[2]{{
  \left.\kern-\nulldelimiterspace 
  #1 
  \vphantom{\big|} 
  \right|_{#2} 
  }}
\title[Extremal parameters of a Kirchhoff type equation]
{The bifurcation diagram of an elliptic Kirchhoff-type equation with respect to the stiffness of the material 
}
\author[K. Silva]{Kaye Silva}
\address[K. Silva]{\newline\indent
	Instituto de Matem\'atica e Estat\'istica.   
	\newline\indent 
	Universidade Federal de Goi\'as,
	\newline\indent
	74001-970, Goi\^ania, GO, Brazil}
\email{\href{mailto:kayeoliveira@hotmail.com}{kayeoliveira@hotmail.com}, \href{mailto:kaye_0liveira@ufg.br}{kaye\_0liveira@ufg.br}}
\subjclass[2010]{Primary  
35A02, 
35A15, 
35B32, 
}
\keywords{Nehari Manifold, Variational Methods, Extremal Parameter, Kirchhoff}
\begin{document}

\begin{abstract}
	We study a superlinear and subcritical Kirchhoff type equation which is variational and depends upon a real parameter $\lambda$. The nonlocal term forces some of the fiber maps associated with the energy functional to have two critical points. This suggest multiplicity of solutions and indeed we show the existence of a local minimum and a mountain pass type solution. We characterize the first parameter $\lambda_0^*$ for which the local minimum has non-negative energy when $\lambda\ge \lambda_0^*$. Moreover we characterize the extremal parameter $\lambda^*$ for which if $\lambda>\lambda^*$, then the only solution to the Kirchhoff equation is the zero function. In fact, $\lambda^*$ can be characterized in terms of the best constant of Sobolev embeddings. We also study the asymptotic behavior of the solutions when $\lambda\downarrow 0$.
\end{abstract}

\maketitle

\section{Introduction}
In this work we study the following Kirchhoff type equation
\begin{equation}\label{p}
\left\{
\begin{aligned}
-\left(a+\lambda\int |\nabla u|^2\right)\Delta u&= |u|^{\gamma-2}u &&\mbox{in}\ \ \Omega, \\
u&=0                                   &&\mbox{on}\ \ \partial\Omega,
\end{aligned}
\right.
\end{equation}
where $a>0$, $\lambda>0$ is a parameter, $\Delta$ is the Laplacian operator and $\Omega\subset \mathbb{R}^3$ is a bounded regular domain. Let $H_0^1(\Omega)$ denote the standard Sobolev space and $\Phi_\lambda:H_0^1(\Omega)\to \mathbb{R}$ the energy functional associated with (\ref{p}), that is
\begin{equation}\label{energyfunctional}
\Phi_\lambda(u)=\frac{a}{2}\int |\nabla u|^2+\frac{\lambda}{4}\left(\int |\nabla u|^2\right)^2-\frac{1}{\gamma}\int |u|^\gamma.
\end{equation} 
We observe that $\Phi_\lambda$ is a $C^1$ functional. By definition a solution to equation (\ref{p}) is a critical point of $\Phi_\lambda$. Our main result is the following
\begin{theorem}\label{T} Suppose $\gamma\in(2,4)$. Then there exist parameters $0<\lambda_0^*<\lambda^*$ and $\varepsilon>0$ such that:
	\begin{enumerate}
		\item[1)] For each $\lambda\in(0,\lambda^*]$ problem \eqref{p} has a positive solution $u_\lambda$ which is a global minimizer for $\Phi_\lambda$ when $\lambda\in(0,\lambda_0^*]$, while $u_\lambda$ is a local minimizer for $\Phi_\lambda$ when $\lambda\in(\lambda_0^*,\lambda^*)$. Moreover $\Phi_{\lambda}''(u_{\lambda})(u_{\lambda},u_{\lambda})>0$ for $\lambda\in(0,\lambda^*)$ and $\Phi_{\lambda^*}''(u_{\lambda^*})(u_{\lambda^*},u_{\lambda^*})=0$.
		\item[2)] For each $\lambda\in(0,\lambda_0^*+\varepsilon)$ problem \eqref{p} has a positive solution $w_\lambda$ which is a mountain pass critical point for $\Phi_\lambda$. 
		\item[3)] If $\lambda\in(0,\lambda_0^*)$ then $\Phi_\lambda(u_\lambda)<0$ while $\Phi_{\lambda_0^*}(u_{\lambda_0^*})=0$ and if $\lambda\in (\lambda_0^*,\lambda^*]$ then $\Phi_\lambda(u_\lambda)>0$.
		\item[4)] $\Phi_\lambda(w_\lambda)>0$ and $\Phi_\lambda(w_\lambda)>\Phi_\lambda(u_\lambda)$ for each $\lambda\in(0,\lambda_0^*+\varepsilon)$.
		\item[5)] If $\lambda>\lambda^*$ then the only solution $u\in H_0^1(\Omega)$ to the problem (\ref{p}) is the zero function $u=0$.
	\end{enumerate}
\end{theorem}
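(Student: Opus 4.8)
The plan is to extract a necessary condition for the existence of a nonzero solution by testing \eqref{p} against the solution itself, reduce it to a one‑variable inequality via the sharp Sobolev inequality, and then recognise the resulting threshold as exactly $\lambda^*$.

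First, suppose $u\in H_0^1(\Omega)$ solves \eqref{p}. Using $u$ as a test function in the weak formulation gives
\[
\left(a+\lambda\int|\nabla u|^2\right)\int|\nabla u|^2=\int|u|^\gamma .
\]
Assume, for contradiction, that $u\neq 0$, and put $t:=\int|\nabla u|^2>0$. Since $\Omega$ is bounded and $\gamma\in(2,4)$, the embedding $H_0^1(\Omega)\hookrightarrow L^\gamma(\Omega)$ is continuous; letting $C$ be its best constant, normalised so that $\int|u|^\gamma\le C^{-1}\left(\int|\nabla u|^2\right)^{\gamma/2}$ for all $u$, the identity above forces
\[
at+\lambda t^2\le C^{-1}t^{\gamma/2},\qquad\text{i.e.}\qquad \lambda\le \psi(t):=C^{-1}t^{\gamma/2-2}-\frac{a}{t}.
\]
In particular $\lambda\le\sup_{t>0}\psi(t)$.

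Next I would study $\psi$ on $(0,\infty)$. Because $\gamma\in(2,4)$ we have $\tfrac{\gamma}{2}-2\in(-1,0)$, so $\psi(t)\to-\infty$ as $t\downarrow0$ (because of $-a/t$) and as $t\to\infty$ (because $t^{\gamma/2-2}\to0$); hence $\psi$ attains a finite maximum at the unique point $t_\star>0$ where $\psi'(t_\star)=0$, namely $C^{-1}\bigl(2-\tfrac{\gamma}{2}\bigr)t_\star^{\gamma/2-1}=a$. This yields a closed form for $t_\star$ and for $\psi(t_\star)=\sup_{t>0}\psi(t)$, the latter being a positive constant depending only on $a$, $\gamma$ and the best Sobolev constant $C$.

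Finally, I would invoke the characterisation of the extremal parameter established earlier in the paper, which identifies $\lambda^*$ with precisely this quantity, $\lambda^*=\psi(t_\star)=\sup_{t>0}\psi(t)$ (this is the sense in which $\lambda^*$ is expressed through the best constant of the Sobolev embedding). Combining this with the bound $\lambda\le\sup_{t>0}\psi(t)$, any nonzero solution of \eqref{p} forces $\lambda\le\lambda^*$; contrapositively, if $\lambda>\lambda^*$ then $u\equiv 0$ is the only solution, which is the assertion of part (5). The only genuine obstacle is bookkeeping of constants: one must check that $C$ and $\psi(t_\star)$ coincide verbatim with the quantities fixed earlier, after which the proof is a two‑line test‑function computation. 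It is worth noting that only the \emph{strict} inequality $\lambda>\lambda^*$ is used — at $\lambda=\lambda^*$ a nonzero solution does exist, in agreement with part (1), because the Sobolev constant is attained on the bounded domain $\Omega$.
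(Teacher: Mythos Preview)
Your proposal addresses only part~(5). Parts (1)--(4) --- the construction of the minimizer $u_\lambda$ on $(0,\lambda^*]$, the mountain-pass solution $w_\lambda$ on $(0,\lambda_0^*+\varepsilon)$, and the sign and ordering of their energies --- are the substance of the theorem and are untouched. In the paper these are obtained from the Nehari decomposition $\mathcal{N}_\lambda=\mathcal{N}_\lambda^+\cup\mathcal{N}_\lambda^0\cup\mathcal{N}_\lambda^-$ together with the fiber-map analysis, the estimate $\hat\Phi_\lambda<\frac{(\gamma-2)^2}{4\gamma(4-\gamma)}\frac{a^2}{\lambda}$, and a mountain-pass geometry calibrated by the threshold $\lambda_0^*$; none of this appears in your write-up.

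Restricting attention to part~(5), your argument is correct and is essentially the paper's argument rephrased. The paper shows that any nonzero critical point lies in $\mathcal{N}_\lambda$ and that $\mathcal{N}_\lambda=\emptyset$ once $\lambda>\lambda^*$; the emptiness is proved via the fiber maps and the function $\lambda(u)=C_{a,\gamma}(\|u\|_\gamma/\|u\|)^{2\gamma/(\gamma-2)}$, whose supremum is $\lambda^*$. Your test-function identity $at+\lambda t^2=\|u\|_\gamma^\gamma$ is exactly the Nehari constraint, and your optimization of $\psi(t)=C^{-1}t^{\gamma/2-2}-a/t$ (after applying the sharp Sobolev inequality) computes the same supremum in the reverse order: first maximize over $u$ at fixed norm via Sobolev, then over $t$. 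A short computation confirms $\psi(t_\star)=C_{a,\gamma}\,C^{-2/(\gamma-2)}=\lambda^*$, so the bookkeeping you flagged does close.

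One slip: you write that $\psi(t)\to-\infty$ as $t\to\infty$, but since $\gamma/2-2>-1$ the term $C^{-1}t^{\gamma/2-2}$ dominates $-a/t$ at infinity and in fact $\psi(t)\to 0^{+}$. This is harmless for the conclusion: $\psi'$ has a unique zero (because $t\mapsto t^{\gamma/2-1}$ is strictly increasing), so $\psi$ increases then decreases and its unique critical point is the global maximum.
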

Kirchhoff type equations have been extensively studied in the literature. It was proposed by Kirchhoff in \cite{Kir} as an model to study some physical problems related to elastic string vibrations and since then it has been studied by many author, see for example the works of Lions \cite{Lions}, Alves et al. \cite{clafra}, Wu et al. \cite{wuch}, Zhang and Perera \cite{perzha} and the references therein. Physically speaking if one wants to study string or membrane vibrations, one is led to the equation \eqref{p}, where $u$ represents the displacement of the membrane, $|u|^{p-2}u$ is an external force, $a$ and $\lambda$ are related to some intrinsic properties of the membrane. In particular, $\lambda$ is related to the Young modulus of the material and it measures its stiffness.

Our main interest here is to analyze equation \eqref{p} with respect to the parameter $\lambda$ (stiffness) and provide a description of the bifurcation diagram. To this end, we will use the fibering method of Pohozaev \cite{poh} to analyse how the Nehari set (see Nehari \cite{neh,neh1}) change with respect to the parameter $\lambda$ and then apply this analysis to study bifurcation properties of the problem (\ref{p}) (see Chen et al. \cite{wuch} and Zhang et al. \cite{zhangnieto}). In fact, the extremal parameter $\lambda^*$ (see Il'yasov \cite{ilyasENMM}) which appears in the Theorem \ref{T} can be characterized variationally by 
\begin{equation*}
\lambda^*=C_{a,\gamma}\sup\left\{\left(\displaystyle\frac{\left(\int |u|^\gamma\right)^{\frac{1}{\gamma}}}{\left(\int |\nabla u|^2\right)^{\frac{1}{2}}}\right)^{\frac{2\gamma}{\gamma-2}}: u\in H_0^1(\Omega)\setminus\{0\}\right\},
\end{equation*}
where $C_{a,\gamma}$ is some positive constant. One can easily see from the last expression that $\lambda^*=C_{a,\gamma}S_\gamma^{\frac{2\gamma}{2-\gamma}}$, where $S_\gamma$ is best Sobolev constant for the embedding $H_0^1(\Omega)\hookrightarrow L^\gamma(\Omega)$.

In this work the extremal parameter $\lambda^*$ has the important role that if $\lambda>\lambda^*$ then the Nehari set is empty while if $\lambda\in(0,\lambda^*)$ then the Nehari set is not empty. Another interesting paramenter is $\lambda_0^*<\lambda^*$ which is characterized by the property that if $\lambda\in (0,\lambda_0^*)$, then $\inf_{u\in H_0^1(\Omega)}\Phi_\lambda(u)<0$ while if $\lambda\ge \lambda_0^*$ the infimum is zero. When $\lambda\in (0,\lambda_0^*]$ one can easily provide a Mountain Pass Geometry and a global minimizer for the functional $\Phi_\lambda$. Although here we characterize $\lambda_0^*$ variationally, one can see that the parameter $a^*$ defined in Theorem 1.3 (ii) of Sun and Wu \cite{sunwu} serves to the same purpose as $\lambda_0^*$ and hence our result for $\lambda\in(0,\lambda_0^*)$ is not new, however, when $\lambda>\lambda_0^*$ we could not find this result in the literature and in this case we need to provide some finer estimates on the Nehari sets in order to solve some technical issues to obtain again a Mountain Pass Geometry and a local minimizer for the functional $\Phi_\lambda$. 

The hypothesis $\gamma\in(2,4)$ has the fundamental role that it forces the problem to be superlinear, subcritical and it allows the existence of fiber maps with two critical points. The existence of these kinds of fiber maps implies multiplicity of solutions (at least two solutions) and once for $\lambda>\lambda^*$ there is no solution at all, the parameter $\lambda^*$ is a bifurcation point where these solutions collapses. We refer the reader to the recently works of Siciliano and Silva \cite{gaeka}, Il'yasov and Silva \cite{YaKa}, Silva and Macedo \cite{KaAb}, where the extremal parameters of some indefinite nonlinear elliptic problems were analyzed.  

Concerning the asymptotic behavior of the solutions when $\lambda\downarrow 0$ we prove the following

\begin{theorem}\label{T2}There holds
	\begin{enumerate}
		\item[i)] $\Phi_\lambda(u_\lambda)\to -\infty$ and $\|u_\lambda\|\to\infty$ as $\lambda\downarrow 0$.
		\item[ii)] $w_\lambda \to w_0$ in $H_0^1(\Omega)$ where $w_0\in H_0^1(\Omega)$ is a mountain pass critical point associated to the equation $-a\Delta w=|w|^{p-2}w$.
	\end{enumerate}
\end{theorem}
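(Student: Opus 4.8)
The plan is to treat the two statements separately, the second one being the substantial one. For part i) it suffices to prove $\Phi_\lambda(u_\lambda)\to-\infty$: if this holds and $\|u_{\lambda_n}\|$ stayed bounded along some $\lambda_n\downarrow 0$, then by the Sobolev embedding $H_0^1(\Omega)\hookrightarrow L^\gamma(\Omega)$ the quantities $\int|u_{\lambda_n}|^\gamma$ would be bounded and hence $\Phi_{\lambda_n}(u_{\lambda_n})\ge \frac a2\|u_{\lambda_n}\|^2-\frac1\gamma\int|u_{\lambda_n}|^\gamma$ would be bounded below, a contradiction; so $\|u_\lambda\|\to\infty$ would follow. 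To get the divergence of the energy I would fix any $u\in H_0^1(\Omega)\setminus\{0\}$, use that $u_\lambda$ is a global minimizer of $\Phi_\lambda$ for small $\lambda$ (Theorem~\ref{T}(1)) so that $\Phi_\lambda(u_\lambda)\le\Phi_\lambda(t u)$ for every $t>0$, and choose $t_\lambda=\lambda^{-1/(2(4-\gamma))}$, so that $t_\lambda\to\infty$ while $\lambda t_\lambda^{4-\gamma}=\lambda^{1/2}\to 0$. Factoring,
\[
\Phi_\lambda(t_\lambda u)=t_\lambda^{\gamma}\Big(\frac{a\|u\|^2}{2\,t_\lambda^{\gamma-2}}+\frac{\lambda t_\lambda^{4-\gamma}\|u\|^4}{4}-\frac1\gamma\int|u|^\gamma\Big),
\]
where the bracket tends to $-\frac1\gamma\int|u|^\gamma<0$, so $\Phi_\lambda(u_\lambda)\le\Phi_\lambda(t_\lambda u)\to-\infty$.

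For part ii), let $\Phi_0(u)=\frac a2\|u\|^2-\frac1\gamma\int|u|^\gamma$ be the energy of $-a\Delta w=|w|^{\gamma-2}w$; since $\gamma\in(2,4)\subset(2,6)$ this is a standard superlinear subcritical functional admitting a positive mountain pass critical point $w_0$ at the level $c_0=\inf_{\mathcal N_0}\Phi_0>0$, the infimum over its Nehari manifold. Writing $c_\lambda=\Phi_\lambda(w_\lambda)$, I would first show $c_\lambda\to c_0$. The inequality $\Phi_0\le\Phi_\lambda$ makes every admissible path for the mountain pass characterization of $c_\lambda$ admissible for that of $c_0$ with $\max\Phi_\lambda\ge\max\Phi_0$ along it, hence $c_\lambda\ge c_0$. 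For the reverse estimate I would evaluate $\Phi_\lambda$ along the segment $t\mapsto tT w_0$, $t\in[0,1]$, with $T>1$ chosen so that $\Phi_0(Tw_0)<0$ (hence $\Phi_\lambda(Tw_0)<0$ for $\lambda$ small); since $\max_{s\in[0,T]}\Phi_0(sw_0)=\Phi_0(w_0)=c_0$ and the Kirchhoff term contributes at most $\tfrac\lambda4 T^4\|w_0\|^4\to0$, this yields $c_\lambda\le c_0+\tfrac\lambda4 T^4\|w_0\|^4$, so $\limsup_{\lambda\downarrow0}c_\lambda\le c_0$; in particular $c_\lambda$ is bounded.

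The delicate step is a uniform $H_0^1$-bound on $w_\lambda$. A lower bound is easy: from the identity $a\|w_\lambda\|^2+\lambda\|w_\lambda\|^4=\int|w_\lambda|^\gamma\le S_\gamma^{-\gamma}\|w_\lambda\|^\gamma$ one gets $\|w_\lambda\|^{\gamma-2}\ge a S_\gamma^{\gamma}$. The upper bound is where the argument must be careful: substituting the above Nehari identity into $\Phi_\lambda(w_\lambda)$ gives the energy identity
\[
a\Big(\tfrac12-\tfrac1\gamma\Big)\|w_\lambda\|^2+\lambda\Big(\tfrac14-\tfrac1\gamma\Big)\|w_\lambda\|^4=c_\lambda ,
\]
which, since the $\lambda$-coefficient is negative, by itself does \emph{not} bound $\|w_\lambda\|$: besides the branch $\|w_\lambda\|^2\approx c_\lambda/(a(\tfrac12-\tfrac1\gamma))$ it admits a spurious large branch with $\lambda\|w_\lambda\|^2$ of order one. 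To exclude the latter I would use that the mountain pass solution lies on the ``unstable'' part of the Nehari set, i.e.\ $\Phi_\lambda''(w_\lambda)(w_\lambda,w_\lambda)\le0$ (this is how $w_\lambda$ is produced); inserting the Nehari identity there gives $(4-\gamma)\lambda\|w_\lambda\|^4\le(\gamma-2)a\|w_\lambda\|^2$, and feeding this back into the energy identity produces $\tfrac{(\gamma-2)a}{4\gamma}\|w_\lambda\|^2\le c_\lambda$, so $\|w_\lambda\|$ is bounded uniformly for small $\lambda$. This is the step I expect to be the main obstacle.

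With the bounds in hand I would conclude as follows. Along any $\lambda_n\downarrow0$, pass to a subsequence with $w_{\lambda_n}\rightharpoonup w_0$ in $H_0^1(\Omega)$ and, by compactness of $H_0^1(\Omega)\hookrightarrow L^\gamma(\Omega)$, $w_{\lambda_n}\to w_0$ in $L^\gamma(\Omega)$. Since $\lambda_n\|w_{\lambda_n}\|^2\to0$, letting $n\to\infty$ in the weak formulation $(a+\lambda_n\|w_{\lambda_n}\|^2)\int\nabla w_{\lambda_n}\cdot\nabla\varphi=\int|w_{\lambda_n}|^{\gamma-2}w_{\lambda_n}\varphi$ shows $-a\Delta w_0=|w_0|^{\gamma-2}w_0$; the lower bound forces $w_0\ne0$, and $w_0\ge0$ because $w_{\lambda_n}>0$. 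Strong convergence follows from $a\|w_{\lambda_n}\|^2=\int|w_{\lambda_n}|^\gamma-\lambda_n\|w_{\lambda_n}\|^4\to\int|w_0|^\gamma=a\|w_0\|^2$ combined with weak convergence in the Hilbert space. Finally $\Phi_0(w_0)=\lim_n\Phi_{\lambda_n}(w_{\lambda_n})=\lim_n c_{\lambda_n}=c_0$, so $w_0$ is a mountain pass critical point of $\Phi_0$, i.e.\ of $-a\Delta w=|w|^{\gamma-2}w$. (If the mountain pass level $c_0$ is attained at a unique point the convergence is along the full family $\lambda\downarrow0$; in general the argument shows that every sequence $\lambda_n\downarrow0$ has a subsequence along which $w_{\lambda_n}$ converges strongly in $H_0^1(\Omega)$ to such a critical point.)
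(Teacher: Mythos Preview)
Your argument for part i) is correct and essentially the same idea as the paper's (the paper just takes $t$ large and then $\lambda$ small, whereas you tie them together via $t_\lambda=\lambda^{-1/(2(4-\gamma))}$; both work).

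For part ii) your outline is largely sound, and the final compactness argument (weak $H_0^1$-limit, strong $L^\gamma$-limit, $\lambda_n\|w_{\lambda_n}\|^2\to 0$, norm convergence from the Nehari identity, identification of the limit level as $c_0$) is correct. Two points need attention.

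\smallskip
\noindent\textbf{(a) A minor issue with the path class.} In the paper the admissible class $\Gamma_\lambda$ fixes the endpoint $\bar u_\lambda=u_{\lambda_0^*}$ for $\lambda\in(0,\lambda_0^*]$, so your test path $t\mapsto tTw_0$ is not in $\Gamma_\lambda$ and does not directly give $c_\lambda\le c_0+\tfrac{\lambda}{4}T^4\|w_0\|^4$. This is easy to repair: use instead the ray $s\mapsto s\,u_{\lambda_0^*}$, $s\in[0,1]$, whose $\Phi_\lambda$-maximum tends to $\Phi_0(t_0^-(u_{\lambda_0^*})u_{\lambda_0^*})<\infty$ as $\lambda\downarrow0$, which already gives the boundedness of $c_\lambda$ you need; the sharper limit $c_\lambda\to c_0$ then follows exactly as in Lemma~\ref{las1} (monotonicity in $\lambda$ plus an $\varepsilon$-argument with a near-optimal path for $c_0$ in $\Gamma$).

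\smallskip
\noindent\textbf{(b) The genuine gap.} Your uniform upper bound on $\|w_\lambda\|$ rests on the inequality $\Phi_\lambda''(w_\lambda)(w_\lambda,w_\lambda)\le 0$, i.e.\ $w_\lambda\in\mathcal N_\lambda^-\cup\mathcal N_\lambda^0$. You justify this only by ``this is how $w_\lambda$ is produced'', but in the paper $w_\lambda$ is obtained from the abstract mountain pass theorem (Proposition~\ref{MPS}), which yields a critical point at level $c_\lambda$ \emph{without} locating it on a specific Nehari component. Nothing proved so far excludes $w_\lambda\in\mathcal N_\lambda^+$: elements of $\mathcal N_\lambda^+$ satisfy $\|u\|^2>\tfrac{(\gamma-2)a}{(4-\gamma)\lambda}$ and can carry any energy in $\bigl(-\infty,\tfrac{(\gamma-2)^2a^2}{4\gamma(4-\gamma)\lambda}\bigr)$, in particular the value $c_\lambda$. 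So as written your key inequality $(4-\gamma)\lambda\|w_\lambda\|^4\le(\gamma-2)a\|w_\lambda\|^2$ is unproved, and with it the bound $\tfrac{(\gamma-2)a}{4\gamma}\|w_\lambda\|^2\le c_\lambda$.

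This gap can be closed, but it requires an extra argument: either (i) prove that every admissible path crosses $\mathcal N_\lambda^-$ and that $c_\lambda=\inf_{\mathcal N_\lambda^-}\Phi_\lambda$, then minimize $\Phi_\lambda$ over $\overline{\mathcal N_\lambda^-}$ directly to produce a critical point there at level $c_\lambda$; or (ii) follow the paper's route, which avoids the issue entirely. The paper first shows $\lambda_n\|w_n\|^4\to0$ by comparing the values of $\Phi_{\lambda_n}$ and $\Phi_0$ along nearly optimal paths, then deduces $\lambda_n\|w_n\|^2\to0$ from the pair of identities $\Phi_{\lambda_n}(w_n)=c_{\lambda_n}$, $\Phi'_{\lambda_n}(w_n)w_n=0$, and finally gets boundedness of $\|w_n\|$ by contradiction. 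Your approach, once (i) is supplied, is shorter and more transparent; the paper's approach is heavier but sidesteps the Nehari-component question.
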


This work is organized as follows: In Section \ref{S2} we provide some definitions and prove technical results which will be used in the next sections. In Section \ref{S3} we show the existence of local minimizers for the functional $\Phi_\lambda$. In Section \ref{S4} we prove the existence of a mountain pass critical point for the functional $\Phi_\lambda$. In Section \ref{S5} we prove Theorem \ref{T}. In Section \ref{S6} we prove Theorem \ref{T2}. In Section \ref{S7} we provide a picture detailing the bifurcation diagram with respect to the energy and make some conjectures and in the Appendix we prove some auxiliary results.

\section{Technical Results}
\label{S2}
We denote by $\|u\|$ the standard Sobolev norm on $H_0^1(\Omega)$ and $\|u\|_\gamma$ the $L^\gamma(\Omega)$ norm. It follows from (\ref{energyfunctional}) that
\begin{equation*}
\Phi_\lambda(u)=\frac{a}{2}\|u\|^2+\frac{\lambda}{4}\|u\|^4-\frac{1}{\gamma}\|u\|_\gamma^\gamma,\ \forall\ u\in H_0^1(\Omega).
\end{equation*}
For each $\lambda>0$ consider the Nehari set
\begin{equation*}
\mathcal{N}_\lambda=\{u\in H_0^1(\Omega)\setminus\{0\}:\ \Phi_\lambda'(u)u=0 \}.
\end{equation*} 
To study the Nehari set we will make use of the fiber maps: for each $\lambda>0$ and $u\in H_0^1(\Omega)\setminus\{0\}$ define $\psi_{\lambda,u}:(0,\infty)\to \mathbb{R}$ by 
\begin{equation*}
\psi_{\lambda,u}(t)=\Phi_\lambda(tu).
\end{equation*}
It follows that 
\begin{equation*}
\mathcal{N}_\lambda=\{u\in H_0^1(\Omega)\setminus\{0\}:\ \psi'_{\lambda,u}(1)=0 \}.
\end{equation*}
We divide the Nehari set into three disjoint sets as follows:
\begin{equation*}
\mathcal{N}_\lambda=\mathcal{N}_\lambda^+\cup \mathcal{N}_\lambda^0\cup \mathcal{N}_\lambda^-,
\end{equation*}
where 
\begin{equation*}
\mathcal{N}_\lambda^+=\{u\in H_0^1(\Omega)\setminus\{0\}:\ \psi'_{\lambda,u}(1)=0,\ \psi''_{\lambda,u}(1)>0 \},
\end{equation*}
\begin{equation*}
\mathcal{N}_\lambda^0=\{u\in H_0^1(\Omega)\setminus\{0\}:\ \psi'_{\lambda,u}(1)=0,\ \psi''_{\lambda,u}(1)=0 \},
\end{equation*}
and
\begin{equation*}
\mathcal{N}_\lambda^-=\{u\in H_0^1(\Omega)\setminus\{0\}:\ \psi'_{\lambda,u}(1)=0,\ \psi''_{\lambda,u}(1)<0 \}.
\end{equation*}
By using the Implicit Function Theorem one can prove the following
\begin{lem}\label{Neharimanifold} If $\mathcal{N}_\lambda^+,\mathcal{N}_\lambda^-$ are non empty then $\mathcal{N}_\lambda^+,\mathcal{N}_\lambda^-$ are $C^1$ manifolds of codimension $1$ in $H_0^1(\Omega)$. Moreover if $u\in\mathcal{N}_\lambda^+\cup\mathcal{N}_\lambda^-$ is a critical point of  $\left(\Phi_\lambda\right)_{|\mathcal{N}_\lambda^+\cup\mathcal{N}_\lambda^-}$, then $u$ is a critical point of $\Phi_\lambda$. 
\end{lem}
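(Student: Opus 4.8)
The plan is to exhibit $\mathcal{N}_\lambda^+$ and $\mathcal{N}_\lambda^-$ as regular level sets of a single $C^1$ functional and then apply the Lagrange multiplier rule. Set $G_\lambda:H_0^1(\Omega)\setminus\{0\}\to\mathbb{R}$,
$G_\lambda(u):=\Phi_\lambda'(u)u=\psi_{\lambda,u}'(1)=a\|u\|^2+\lambda\|u\|^4-\|u\|_\gamma^\gamma$.
Since $\gamma\in(2,4)$ is subcritical, $\Phi_\lambda$ is in fact $C^2$ on $H_0^1(\Omega)$, so $G_\lambda$ is $C^1$. By definition $\mathcal{N}_\lambda=G_\lambda^{-1}(0)$, and because $u\mapsto\psi_{\lambda,u}''(1)$ is continuous, the sets $\{u\neq 0:\psi_{\lambda,u}''(1)>0\}$ and $\{u\neq 0:\psi_{\lambda,u}''(1)<0\}$ are open in $H_0^1(\Omega)$; thus $\mathcal{N}_\lambda^+$ and $\mathcal{N}_\lambda^-$ are the intersections of $G_\lambda^{-1}(0)$ with these open sets, hence relatively open in $\mathcal{N}_\lambda$.

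The one computation that matters is the identity $G_\lambda'(u)u=\psi_{\lambda,u}''(1)$ for every $u\in\mathcal{N}_\lambda$. To obtain it I would note that $G_\lambda(tu)=\Phi_\lambda'(tu)(tu)=t\,\psi_{\lambda,u}'(t)$ for $t>0$, and differentiate at $t=1$: this gives $G_\lambda'(u)u=\psi_{\lambda,u}'(1)+\psi_{\lambda,u}''(1)$, and the first term is $0$ on $\mathcal{N}_\lambda$. (Equivalently, one substitutes the constraint $a\|u\|^2+\lambda\|u\|^4=\|u\|_\gamma^\gamma$ into the explicit expression $G_\lambda'(u)u=2a\|u\|^2+4\lambda\|u\|^4-\gamma\|u\|_\gamma^\gamma$, getting $(2-\gamma)a\|u\|^2+(4-\gamma)\lambda\|u\|^4=\psi_{\lambda,u}''(1)$.) Consequently, at each point of $\mathcal{N}_\lambda^+\cup\mathcal{N}_\lambda^-$ we have $G_\lambda'(u)u\neq 0$, hence $G_\lambda'(u)\neq 0$ as an element of $H^{-1}(\Omega)$. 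Therefore $0$ is a regular value of the restriction of $G_\lambda$ to each of the two open sets above, and the Implicit Function Theorem yields that $\mathcal{N}_\lambda^+$ and $\mathcal{N}_\lambda^-$ are $C^1$ submanifolds of $H_0^1(\Omega)$ of codimension $1$.

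For the second assertion I would invoke the Lagrange multiplier rule on the manifold $\mathcal{N}_\lambda^+\cup\mathcal{N}_\lambda^-$ (a disjoint union of two open pieces, so the rule applies locally on whichever piece contains the point). If $u$ is a critical point of $\left(\Phi_\lambda\right)_{|\mathcal{N}_\lambda^+\cup\mathcal{N}_\lambda^-}$, then there is $\mu\in\mathbb{R}$ with $\Phi_\lambda'(u)=\mu\,G_\lambda'(u)$ in $H^{-1}(\Omega)$. Evaluating this identity at $u$ itself gives $0=\Phi_\lambda'(u)u=\mu\,G_\lambda'(u)u=\mu\,\psi_{\lambda,u}''(1)$, and since $\psi_{\lambda,u}''(1)\neq 0$ on $\mathcal{N}_\lambda^+\cup\mathcal{N}_\lambda^-$ we conclude $\mu=0$, i.e. $\Phi_\lambda'(u)=0$, so $u$ is a critical point of $\Phi_\lambda$.

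The argument is essentially routine. The only point requiring a little care is the bookkeeping ensuring that $G_\lambda'(u)\neq 0$ holds precisely on $\mathcal{N}_\lambda^+\cup\mathcal{N}_\lambda^-$ and not on all of $\mathcal{N}_\lambda$ — indeed $\mathcal{N}_\lambda^0$ is exactly the locus where $G_\lambda'(u)u=\psi_{\lambda,u}''(1)$ vanishes, which is why the statement is restricted to $\mathcal{N}_\lambda^\pm$. I do not expect any genuine obstacle.
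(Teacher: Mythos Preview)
Your proof is correct and follows exactly the approach the paper indicates: the paper does not spell out any details but simply states that the lemma follows ``by using the Implicit Function Theorem'', and your argument via $G_\lambda(u)=\Phi_\lambda'(u)u$, the identity $G_\lambda'(u)u=\psi_{\lambda,u}''(1)$ on $\mathcal{N}_\lambda$, and the Lagrange multiplier rule is precisely the standard way to make that sentence rigorous.
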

In order to understand the Nehari set $\mathcal{N}_\lambda$ we study the fiber maps $\psi_{\lambda,u}$. Simple Analysis arguments show that
\begin{prop}\label{fibering}For each $\lambda>0$ and $u\in H_0^1(\Omega)\setminus\{0\}$, there are only three possibilities for the graph of $ \psi_{\lambda,u}$
	\begin{enumerate}
		\item[I)] The function $\psi_{\lambda,u}$ has only two critical points, to wit, $0<t_\lambda^-(u)<t_\lambda^+(u)$. Moreover, $t_\lambda^-(u)$ is a local maximum with $\psi''_{\lambda,u}(t_\lambda^-(u))<0$ and $t_\lambda^+(u)$ is a local minimum with $\psi''_{\lambda,u}(t_\lambda^+(u))>0$;   
		\item[II)] The function  $\psi_{\lambda,u}$ has only one critical point when $t>0$ at the value $t_\lambda(u)$. Moreover, $\psi''_{\lambda,u}(t_\lambda(u))=0$ and  $\psi_{\lambda,u}$ is increasing;
		\item[III)] The function  $\psi_{\lambda,u}$ is increasing and has no critical points.
	\end{enumerate}
\end{prop}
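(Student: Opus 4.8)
The plan is to reduce the proposition to an elementary one-variable analysis. Fix $\lambda>0$ and $u\in H_0^1(\Omega)\setminus\{0\}$, and abbreviate $A=\|u\|^2>0$, $B=\|u\|_\gamma^\gamma>0$. From the formula for $\Phi_\lambda$ one has $\psi_{\lambda,u}(t)=\frac{a}{2}At^2+\frac{\lambda}{4}A^2t^4-\frac{1}{\gamma}Bt^\gamma$, and therefore $\psi'_{\lambda,u}(t)=t\,\phi(t)$ for $t>0$, where $\phi(t):=aA+\lambda A^2t^2-Bt^{\gamma-2}$. Since the positive critical points of $\psi_{\lambda,u}$ are exactly the positive zeros of $\phi$, and since $\psi''_{\lambda,u}(t)=\phi(t)+t\phi'(t)$ yields $\psi''_{\lambda,u}(t^\ast)=t^\ast\phi'(t^\ast)$ at any such critical point $t^\ast$, the whole statement follows once the graph of $\phi$ on $(0,\infty)$ is understood.

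Next I would analyze $\phi$. Using $\gamma-2\in(0,2)$ one gets $\phi(0^+)=aA>0$ and $\phi(t)\to+\infty$ as $t\to\infty$ (here it is essential that $2>\gamma-2$, i.e.\ $\gamma<4$). Moreover $\phi'(t)=2\lambda A^2t-(\gamma-2)Bt^{\gamma-3}$ vanishes on $(0,\infty)$ precisely when $t^{4-\gamma}=\frac{(\gamma-2)B}{2\lambda A^2}$, which, since $4-\gamma\in(0,2)$, has a unique solution $t_{\min}>0$; one checks $\phi'<0$ on $(0,t_{\min})$ and $\phi'>0$ on $(t_{\min},\infty)$, so $\phi$ has a strict global minimum at $t_{\min}$. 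Exactly one of the following then holds: (a) $\phi(t_{\min})<0$, and $\phi$ has exactly two zeros $0<t_\lambda^-(u)<t_{\min}<t_\lambda^+(u)$ with $\phi<0$ strictly between them; (b) $\phi(t_{\min})=0$, and $t_{\min}$ is the only zero, with $\phi\ge0$ everywhere; (c) $\phi(t_{\min})>0$, and $\phi>0$ on $(0,\infty)$.

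Finally I would translate these three alternatives back to $\psi_{\lambda,u}$ via $\psi'_{\lambda,u}(t)=t\phi(t)$. In case (a), $\psi'_{\lambda,u}$ is positive on $(0,t_\lambda^-(u))$, negative on $(t_\lambda^-(u),t_\lambda^+(u))$, and positive again on $(t_\lambda^+(u),\infty)$, so $t_\lambda^-(u)$ is a local maximum and $t_\lambda^+(u)$ a local minimum; since $t_\lambda^-(u)$ lies on the decreasing branch of $\phi$ and $t_\lambda^+(u)$ on the increasing one, $\psi''_{\lambda,u}(t_\lambda^-(u))=t_\lambda^-(u)\phi'(t_\lambda^-(u))<0$ and $\psi''_{\lambda,u}(t_\lambda^+(u))=t_\lambda^+(u)\phi'(t_\lambda^+(u))>0$ — this is case I. In case (b), $\psi'_{\lambda,u}\ge0$ with equality only at $t_{\min}$, hence $\psi_{\lambda,u}$ is increasing with the single critical point $t_\lambda(u)=t_{\min}$ and $\psi''_{\lambda,u}(t_\lambda(u))=0$ — case II. In case (c), $\psi'_{\lambda,u}>0$ throughout, so $\psi_{\lambda,u}$ is increasing with no critical point — case III. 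The argument is routine; the only genuine subtlety is tracking where the hypothesis $\gamma\in(2,4)$ enters, namely to force $\phi(0^+)>0$ together with coercivity (so that a negative minimum produces exactly two zeros rather than one or infinitely many) and to force $\phi'$ to have a unique zero on $(0,\infty)$ (so that $\phi$ has a single interior minimum). I do not expect any real obstacle beyond this bookkeeping of sign conditions.
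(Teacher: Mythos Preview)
Your argument is correct. The paper does not actually give a proof of this proposition; it simply says ``Simple Analysis arguments show that'' and states the trichotomy. Your reduction $\psi'_{\lambda,u}(t)=t\phi(t)$ and the subsequent analysis of $\phi$ via its unique interior minimum is exactly the kind of elementary argument the author has in mind, and every step checks out (in particular your identification of where $\gamma\in(2,4)$ is used).
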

It follows from Proposition \ref{fibering} that $\mathcal{N}_\lambda^+,\mathcal{N}_\lambda^-$ are non empty if and only if the item $I)$ is satisfied. Therefore, it remains to show whether $I)$ is satisfied or not. For this purpose we study for what values of $\lambda$ there holds $\mathcal{N}_\lambda^0\neq \emptyset$. Note that $tu\in \mathcal{N}_\lambda^0$ for $t>0$ and $u\in H_0^1(\Omega)\setminus\{0\}$ if and only if 
\begin{equation*}
\left\{
\begin{aligned}
\psi'_{\lambda,u}(t) &= 0, \\
\psi''_{\lambda,u}(t) &= 0,
\end{aligned}
\right.
\end{equation*}
or equivalently 
\begin{equation}\label{extremal}
\left\{
\begin{aligned}
a\|u\|^2+\lambda \|u\|^4t^{2}-\|u\|_\gamma^\gamma t^{\gamma-2} &= 0, \\
a\|u\|^2+3\lambda \|u\|^4t^{2}-(\gamma-1)\|u\|_\gamma^\gamma t^{\gamma-2} &= 0.
\end{aligned}
\right.
\end{equation}
We solve the system (\ref{extremal}) with respect to the variable $(t,\lambda)$ to obtain for each $u\in H_0^1(\Omega)\setminus \{0\}$ a unique pair $(t(u),\lambda(u))$ such that
\begin{equation}\label{tu}
t(u)=\left(\frac{2a}{4-\gamma}\frac{\|u\|^2}{\|u\|_\gamma^\gamma}\right)^{\frac{1}{\gamma-2}},
\end{equation}
\begin{equation}\label{extrefunction}
\lambda(u)=C_{a,\gamma}\left(\frac{\|u\|_\gamma}{\|u\|}\right)^{\frac{2\gamma}{\gamma-2}},
\end{equation}
where
\begin{equation*}
C_{a,\gamma}=a\left(\frac{\gamma-2}{4-\gamma}\right)\left(\frac{4-\gamma}{2a}\right)^{\frac{2}{\gamma-2}}.
\end{equation*} 
We define the extremal parameter (see Il'yasov \cite{ilyasENMM})
\begin{equation}\label{extremalpara}
\lambda^*=\sup_{u\in H_0^1(\Omega)\setminus \{0\}}\lambda(u).
\end{equation}
We also consider another parameter which is defined as a solution of the system 
\begin{equation*}
\left\{
\begin{aligned}
\psi_{\lambda,u}(t)  &= 0, \\
\psi'_{\lambda,u}(t) &= 0,
\end{aligned}
\right.
\end{equation*}
or equivalently 
\begin{equation}\label{zeroenergy}
\left\{
\begin{aligned}
\frac{a}{2}\|u\|^2+\frac{\lambda}{4} t^{2}\|u\|^4-\frac{1}{\gamma}t^{\gamma-2}\|u\|_\gamma^\gamma &= 0, \\
a\|u\|^2+\lambda t^2\|u\|^4-t^{\gamma-2}\|u\|_\gamma^\gamma &= 0.
\end{aligned}
\right.
\end{equation}
Similar to the system (\ref{extremal}) we can solve the system \eqref{zeroenergy} with respect to the variable $(t,\lambda)$ to find a unique pair $(t_0(u),\lambda_0(u))$. Moreover, one can easily see that 
\begin{equation*}
\lambda_0(u)=C_{0,a,\gamma}\lambda(u),\ \forall\ u\in H_0^1(\Omega)\setminus \{0\}, 
\end{equation*}
where
\begin{equation*}
C_{0,a,\gamma}=2\left(\frac{2}{\gamma}\right)^{\frac{2}{\gamma-2}}.
\end{equation*} 
Observe that $C_{0,a,\gamma}<1$. We define
\begin{equation}\label{zeroenergypara}
\lambda_0^*=\sup_{u\in H_0^1(\Omega)\setminus \{0\}}\lambda_0(u).
\end{equation}
The functions $\lambda(u)$ and $\lambda_0(u)$ has the following geometrical interpretation 
\begin{prop}\label{fiberingvariation} For each $u\in H_0^1(\Omega)\setminus\{0\}$ there holds
	\begin{enumerate}
		\item[i)] $\lambda(u)$ is the unique parameter $\lambda>0$ for which the fiber map $\psi_{\lambda,u}$ has a critical point with second derivative zero at $t(u)$. Moreover, if $0<\lambda<\lambda(u)$, then $\psi_{\lambda,u}$ satisfies I) of Proposition \ref{fibering} while if $\lambda>\lambda(u)$, then $\psi_{\lambda,u}$ satisfies III) of Proposition \ref{fibering}.
		\item[ii)] $\lambda_0(u)$ is the unique parameter $\lambda>0$ for which the fiber map $\psi_{\lambda,u}$ has a critical point with zero energy at $t_0(u)$. Moreover, if $0<\lambda<\lambda_0(u)$, then $\inf_{t>0}\psi_{\lambda,u}(t)<0$ while if $\lambda>\lambda(u)$, then $\inf_{t>0}\psi_{\lambda,u}(t)=0$.
	\end{enumerate}
\end{prop}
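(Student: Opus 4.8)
The plan is to fix $u\in H_0^1(\Omega)\setminus\{0\}$, write $A=a\|u\|^2$, $B=\|u\|^4$, $C=\|u\|_\gamma^\gamma$ (all positive), and reduce both assertions to a one-variable analysis. Differentiation gives, for $t>0$,
\begin{equation*}
\psi'_{\lambda,u}(t)=t\bigl(A-h_\lambda(t)\bigr),\qquad h_\lambda(t):=C\,t^{\gamma-2}-\lambda B\,t^2,
\end{equation*}
and $\psi''_{\lambda,u}(t)=A-h_\lambda(t)-t\,h'_\lambda(t)$, so the positive critical points of $\psi_{\lambda,u}$ are exactly the solutions of $h_\lambda(t)=A$, and at such a point $\psi''_{\lambda,u}(t)=-t\,h'_\lambda(t)$. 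Because $\gamma-2\in(0,2)$, one has $h_\lambda(0^+)=0$, $h_\lambda(t)\to-\infty$, and $h_\lambda$ is strictly increasing then strictly decreasing with a single maximizer $\tau_\lambda=\bigl((\gamma-2)C/(2\lambda B)\bigr)^{1/(4-\gamma)}$; set $M(\lambda):=h_\lambda(\tau_\lambda)>0$. Then $h_\lambda(t)=A$ has two solutions $t_\lambda^-(u)<\tau_\lambda<t_\lambda^+(u)$ when $M(\lambda)>A$, the double solution $\tau_\lambda$ when $M(\lambda)=A$, and none when $M(\lambda)<A$; reading off the sign of $\psi'_{\lambda,u}=t(A-h_\lambda)$ and of $\psi''_{\lambda,u}(t_0)=-t_0h'_\lambda(t_0)$ at each root shows that these three alternatives are exactly cases I), II), III) of Proposition \ref{fibering}, with $t_\lambda^-(u)$ the local maximum and $t_\lambda^+(u)$ the local minimum in case I).

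To prove i), I would identify $\lambda(u)$ as the unique parameter with $M(\lambda)=A$. Indeed, system \eqref{extremal} is equivalent to the pair $h_\lambda(t)=A$, $h'_\lambda(t)=0$, that is, to $t=\tau_\lambda$ together with $M(\lambda)=A$; since \eqref{extremal} has a unique solution $(t(u),\lambda(u))$ (established above), $M(\lambda)=A$ holds precisely for $\lambda=\lambda(u)$, and then $\tau_{\lambda(u)}=t(u)$ and $\psi''_{\lambda(u),u}(t(u))=0$. For the monotone transition, $\lambda_1<\lambda_2$ gives $h_{\lambda_1}(t)>h_{\lambda_2}(t)$ for every $t>0$, hence $M(\lambda_1)\ge h_{\lambda_1}(\tau_{\lambda_2})>h_{\lambda_2}(\tau_{\lambda_2})=M(\lambda_2)$; so $M$ is strictly decreasing, and with $M(\lambda(u))=A$ one gets $M(\lambda)>A$ for $\lambda<\lambda(u)$ and $M(\lambda)<A$ for $\lambda>\lambda(u)$, i.e.\ $\psi_{\lambda,u}$ satisfies I) in the first case and III) in the second. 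This is assertion i).

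For ii), set $m(\lambda):=\psi_{\lambda,u}(t_\lambda^+(u))$ for $\lambda\in(0,\lambda(u))$, the value of $\psi_{\lambda,u}$ at its local minimum. Since $\gamma<4$, $\psi_{\lambda,u}(t)\to+\infty$ as $t\to\infty$; and since $\psi_{\lambda,u}(0^+)=0$ with $\psi_{\lambda,u}$ increasing on $(0,t_\lambda^-(u))$, the local maximum value is positive. Hence $\inf_{t>0}\psi_{\lambda,u}(t)=\min\{0,m(\lambda)\}$. For $\lambda\ge\lambda(u)$, by i) $\psi_{\lambda,u}$ is nondecreasing with $\psi_{\lambda,u}(0^+)=0$, so $\psi_{\lambda,u}>0$ on $(0,\infty)$, $\inf_{t>0}\psi_{\lambda,u}=0$, and $\psi_{\lambda,u}$ has no critical point of zero energy. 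Therefore $\psi_{\lambda,u}$ has a critical point of zero energy if and only if $\lambda<\lambda(u)$ and $m(\lambda)=0$, in which case it equals $t_\lambda^+(u)$; and $\inf_{t>0}\psi_{\lambda,u}<0$ if and only if $m(\lambda)<0$.

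It remains to locate the zero of $m$. Abbreviate $\xi:=t_\lambda^+(u)$ (a function of $\lambda$). Since $\xi$ is a simple root of $h_\lambda(\cdot)=A$ with $h'_\lambda(\xi)<0$, the Implicit Function Theorem makes $\lambda\mapsto\xi$ of class $C^1$ on $(0,\lambda(u))$, and the envelope theorem gives $m'(\lambda)=\partial_\lambda\psi_{\lambda,u}(t)\big|_{t=\xi}=\tfrac14\|u\|^4\,\xi^4>0$, so $m$ is strictly increasing. As $\lambda\to\lambda(u)^-$ the two roots merge, $\xi\to t(u)$, whence $m(\lambda)\to\psi_{\lambda(u),u}(t(u))>0$ by the previous paragraph; as $\lambda\to0^+$, $\xi\ge\tau_\lambda\to\infty$, and eliminating $C$ via $C\xi^{\gamma-2}=A+\lambda B\xi^2$ gives $m(\lambda)=\tfrac14 A\,\xi^2-\tfrac{4-\gamma}{4\gamma}C\,\xi^\gamma\to-\infty$ (since $\gamma>2$). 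Thus $m$ has a unique zero $\lambda^\flat\in(0,\lambda(u))$; at $\lambda^\flat$ the pair $(\xi,\lambda^\flat)$ solves \eqref{zeroenergy}, so by uniqueness of that solution $\lambda^\flat=\lambda_0(u)$ and $t_0(u)=t_{\lambda_0(u)}^+(u)$ — consistent with $\lambda_0(u)=C_{0,a,\gamma}\lambda(u)<\lambda(u)$ — which is the first claim of ii). Finally $\inf_{t>0}\psi_{\lambda,u}<0$ for $\lambda<\lambda_0(u)$, while $\inf_{t>0}\psi_{\lambda,u}=0$ for $\lambda_0(u)<\lambda<\lambda(u)$ and for $\lambda\ge\lambda(u)$; this proves ii), indeed in the sharpened form with $\lambda_0(u)$ in place of $\lambda(u)$ in the last inequality. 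The main obstacle is this chain of monotonicity and sign facts — chiefly the strict monotonicity of $M$ and of $m$ and the two limits of $m$; once these are in hand, matching the thresholds to $\lambda(u)$ and $\lambda_0(u)$ is merely a reuse of the explicit solutions of \eqref{extremal} and \eqref{zeroenergy}.
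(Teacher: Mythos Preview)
Your proof is correct but takes a genuinely different route from the paper. The paper's argument is a two-line comparison: since $\lambda\mapsto\psi'_{\lambda,u}(t)$ and $\lambda\mapsto\psi_{\lambda,u}(t)$ are strictly increasing for each fixed $t>0$, and since $\psi'_{\lambda(u),u}\ge0$ on $(0,\infty)$ (case II) of Proposition~\ref{fibering}) with equality only at $t(u)$, one reads off I) for $\lambda<\lambda(u)$ and III) for $\lambda>\lambda(u)$ by direct inequality; likewise $\psi_{\lambda_0(u),u}\ge0$ with equality only at $t_0(u)$ gives ii). You instead build auxiliary envelope functions $M(\lambda)=\max_t h_\lambda$ and $m(\lambda)=\psi_{\lambda,u}(t_\lambda^+(u))$, prove each strictly monotone (the first by pointwise comparison, the second via the envelope/implicit function theorem), and locate their respective level sets $\{M=A\}$ and $\{m=0\}$ through endpoint limits. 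This is longer and uses more machinery, but it is more quantitative: it delivers the $C^1$ dependence of $t_\lambda^\pm(u)$ and of $m$ on $\lambda$, identifies $t_0(u)=t_{\lambda_0(u)}^+(u)$ explicitly, and sharpens the last clause of ii) to the threshold $\lambda_0(u)$ rather than $\lambda(u)$ (indeed the paper's statement of ii) appears to contain a typo there). One minor slip in your write-up: where you say ``eliminating $C$ via $C\xi^{\gamma-2}=A+\lambda B\xi^2$'' you are in fact eliminating $\lambda$, not $C$; the resulting formula $m(\lambda)=\tfrac14 A\xi^2-\tfrac{4-\gamma}{4\gamma}C\xi^\gamma$ and the limit $m(\lambda)\to-\infty$ are nonetheless correct.
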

\begin{proof} $i)$ The uniqueness of $\lambda(u)$ comes from equation \eqref{extremal}. Assume that $\lambda\in(0,\lambda(u))$, then $\psi_{\lambda,u}$ must satisfies $I)$ or $III)$ of Proposition \ref{fibering}. We claim that it must satisfies $I)$. Indeed, suppose on the contrary that it satisfies $III)$. Once
	\begin{equation*}
	\psi'_{\lambda(u),u}(t)>\psi'_{\lambda,u}(t)>0, \forall\ t>0,
	\end{equation*}
	we reach a contradiction since $\psi'_{\lambda(u),u}(t(u))=0$ where $t(u)$ is given by \eqref{tu}, therefore $\psi_{\lambda,u}$ must satisfies $I)$. Now suppose that $\lambda>\lambda(u)$, then 
	\begin{equation*}
	\psi'_{\lambda,u}(t)>\psi'_{\lambda(u),u}(t)\ge0, \forall\ t>0,
	\end{equation*}
	and hence $\psi_{\lambda,u}$ must satisfies $III)$. 
	
	$ii)$ The uniqueness of $\lambda_0(u)$ comes from equation \eqref{zeroenergy}. If $0<\lambda<\lambda_0(u)$ then from the definition we have
	\begin{equation*}
	\psi_{\lambda,u}(t_0(u))<\psi_{\lambda_0(u),u}(t_0(u))=0,
	\end{equation*}
	which implies that $\inf_{t>0}\psi_{\lambda,u}(t)<0$. If $\lambda>\lambda_0(u)$ then
	\begin{equation*}
	\psi_{\lambda,u}(t))>\psi_{\lambda_0(u),u}(t)\ge0, \forall t>0,
	\end{equation*}
	and therefore $\inf_{t>0}\psi_{\lambda,u}(t)=\psi_{\lambda,u}(0)=0$.
\end{proof}
Now we turn our attention to the parameters $\lambda^*$ and $\lambda_0^*$.
\begin{prop}\label{extremalparameter} There holds $\lambda_0^*<\lambda^*<\infty$. Moreover, there exists $ u\in H_0^1(\Omega)\setminus \{0\}$ such that $\lambda(u)=\lambda^*$ and $\lambda_0(u)=\lambda_0^*$.
\end{prop}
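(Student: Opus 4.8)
The plan is to reduce everything to a standard compactness argument for the best Sobolev constant. First I would record the elementary consequences of the identity $\lambda_0(u)=C_{0,a,\gamma}\lambda(u)$ established above: taking suprema over $u\in H_0^1(\Omega)\setminus\{0\}$ gives $\lambda_0^*=C_{0,a,\gamma}\lambda^*$, and since $0<C_{0,a,\gamma}<1$, the strict inequality $\lambda_0^*<\lambda^*$ follows as soon as we know $0<\lambda^*<\infty$; moreover, because $\lambda_0$ and $\lambda$ differ only by a positive multiplicative constant, any maximizer of $\lambda(\cdot)$ is automatically a maximizer of $\lambda_0(\cdot)$, so the ``moreover'' part reduces to producing a maximizer for $\lambda$.

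Next I would prove $0<\lambda^*<\infty$. Positivity is immediate since $\lambda(u)>0$ for every $u\neq 0$. For finiteness, because $2<\gamma<4<6=2^*$, the embedding $H_0^1(\Omega)\hookrightarrow L^\gamma(\Omega)$ is continuous, so there is $C_S>0$ with $\|u\|_\gamma\le C_S\|u\|$ for all $u$; substituting this into formula \eqref{extrefunction} yields $\lambda(u)\le C_{a,\gamma}C_S^{2\gamma/(\gamma-2)}$ uniformly in $u$, hence $\lambda^*<\infty$.

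For attainment I would use that $\lambda$ is $0$-homogeneous, so $\lambda^*=C_{a,\gamma}\,\sigma^{2\gamma/(\gamma-2)}$ with $\sigma:=\sup\{\|u\|_\gamma:\|u\|=1\}\in(0,\infty)$. Picking a maximizing sequence $(u_n)$ on the unit sphere of $H_0^1(\Omega)$ with $\|u_n\|_\gamma\to\sigma$, by reflexivity one has, along a subsequence, $u_n\rightharpoonup u_0$ in $H_0^1(\Omega)$, and by the Rellich--Kondrachov theorem (again using $\gamma<2^*$) the embedding into $L^\gamma(\Omega)$ is compact, so $u_n\to u_0$ strongly in $L^\gamma(\Omega)$ and therefore $\|u_0\|_\gamma=\sigma>0$; in particular $u_0\neq 0$. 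Weak lower semicontinuity of the norm gives $\|u_0\|\le\liminf\|u_n\|=1$, whence
\[
\lambda(u_0)=C_{a,\gamma}\left(\frac{\|u_0\|_\gamma}{\|u_0\|}\right)^{\frac{2\gamma}{\gamma-2}}\ge C_{a,\gamma}\,\sigma^{\frac{2\gamma}{\gamma-2}}=\lambda^*,
\]
and since $\lambda(u_0)\le\lambda^*$ by definition of the supremum, we conclude $\lambda(u_0)=\lambda^*$ (and incidentally $\|u_0\|=1$). Finally $\lambda_0(u_0)=C_{0,a,\gamma}\lambda(u_0)=C_{0,a,\gamma}\lambda^*=\lambda_0^*$, which completes the argument.

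I expect the only point of genuine substance to be the passage to the limit in the maximizing sequence, that is, the compactness of $H_0^1(\Omega)\hookrightarrow L^\gamma(\Omega)$; this is precisely where the subcriticality contained in the hypothesis $\gamma\in(2,4)$ is used. Everything else is bookkeeping with the explicit formula \eqref{extrefunction} and the relation $\lambda_0=C_{0,a,\gamma}\lambda$.
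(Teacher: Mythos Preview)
Your proof is correct and follows essentially the same route as the paper: normalize by $0$-homogeneity to the unit sphere of $H_0^1(\Omega)$, pass to a weakly convergent maximizing subsequence, use the compact embedding $H_0^1(\Omega)\hookrightarrow L^\gamma(\Omega)$ together with weak lower semicontinuity of the norm to show the limit is a nonzero maximizer, and then invoke $\lambda_0=C_{0,a,\gamma}\lambda$ with $C_{0,a,\gamma}<1$. The only cosmetic difference is that you reformulate the problem as maximizing $\|u\|_\gamma$ on $\{\|u\|=1\}$ before passing to the limit, whereas the paper works directly with $\lambda(u_n)$; the underlying argument is identical.
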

\begin{proof} Indeed, from the Sobolev embedding it follows that $\lambda_0,\lambda^*<\infty$. Now observe that $\lambda(u)$ is $0$-homogeneous, that is $\lambda(tu)=\lambda(u)$ for each $t>0$. It follows that there exists a sequence $u_n\in H_0^1(\Omega)\setminus \{0\}$ such that $\|u_n\|=1$ and $\lambda(u_n)\to \lambda^*$ as $n\to \infty$. We can assume that $u_n \rightharpoonup u$ in $H_0^1(\Omega)$ and $u_n\to u$ in $L^\gamma(\Omega)$. Moreover, from (\ref{extrefunction}) it follows that $u\neq 0$. We conclude that
	\begin{equation*}
	\lambda\left(\frac{u}{\|u\|}\right)=\lambda(u)\ge C_{a,\gamma}\left(\frac{\lim_{n\to \infty}\|u_n\|_\gamma}{\liminf_{n\to \infty}\|u_n\|}\right)^{\frac{2\gamma}{\gamma-2}}\ge \limsup_{n\to \infty}\lambda(u_n)=\lambda^*,
	\end{equation*}
	and hence $u_n\to u$ in $H_0^1(\Omega)$ and $u$ satisfies $\lambda(u)=\lambda^*$. Once $\lambda_0(u)$ is a mulitple of $\lambda(u)$ it follows also that $\lambda_0(u)=\lambda_0^*$ and from $C_{0,a,\gamma}<1$, we conclude that  $\lambda_0^*<\lambda^*$.
\end{proof}
As a consequence of Proposition \ref{extremalparameter} we have the following
\begin{prop}\label{Neharimanifoldsandzeroenergy} There holds
	\begin{enumerate}
		\item[i)] For each $\lambda\in (0,\lambda^*)$ we have that $\mathcal{N}_\lambda^+$ and $\mathcal{N}_\lambda^-$ are non empty. Moreover, if $\lambda>\lambda^*$ then $\mathcal{N}_\lambda=\emptyset$.
		\item[ii)] For each $\lambda\in (0,\lambda_0^*)$ there exists $u\in H_0^1(\Omega)\setminus\{0\}$ such that $\Phi_\lambda(u)<0$. Moreover, if $\lambda\ge\lambda_0^*$ then $\inf_{t>0}\psi_{\lambda,u}(t)=0$ for each $u\in H_0^1(\Omega)\setminus\{0\}$.
	\end{enumerate}
\end{prop}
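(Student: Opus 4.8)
The plan is to deduce both parts directly from the fiber-map dictionary of Propositions~\ref{fibering} and~\ref{fiberingvariation}, together with the attainment of $\lambda^{*}$ and $\lambda_{0}^{*}$ furnished by Proposition~\ref{extremalparameter}. The only auxiliary fact I need is the scaling identity $\psi_{\lambda,su}(t)=\Phi_{\lambda}(stu)=\psi_{\lambda,u}(st)$, which on differentiating at $t=1$ gives $\psi_{\lambda,su}'(1)=s\,\psi_{\lambda,u}'(s)$ and $\psi_{\lambda,su}''(1)=s^{2}\,\psi_{\lambda,u}''(s)$; hence, for $s>0$, one has $su\in\mathcal{N}_{\lambda}$ iff $s$ is a critical point of $\psi_{\lambda,u}$, in which case $su\in\mathcal{N}_{\lambda}^{+}$ (resp.\ $\mathcal{N}_{\lambda}^{-}$) iff $\psi_{\lambda,u}''(s)>0$ (resp.\ $<0$).

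For part~(i): given $\lambda\in(0,\lambda^{*})$, I would take $v\in H_{0}^{1}(\Omega)\setminus\{0\}$ with $\lambda(v)=\lambda^{*}$ from Proposition~\ref{extremalparameter}. Since $\lambda<\lambda(v)$, Proposition~\ref{fiberingvariation}(i) places $\psi_{\lambda,v}$ in case~I) of Proposition~\ref{fibering}, so it has a local maximum at $t_{\lambda}^{-}(v)$ with $\psi_{\lambda,v}''(t_{\lambda}^{-}(v))<0$ and a local minimum at $t_{\lambda}^{+}(v)$ with $\psi_{\lambda,v}''(t_{\lambda}^{+}(v))>0$; by the scaling identity, $t_{\lambda}^{-}(v)v\in\mathcal{N}_{\lambda}^{-}$ and $t_{\lambda}^{+}(v)v\in\mathcal{N}_{\lambda}^{+}$, so both are nonempty. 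For the second assertion, if $\lambda>\lambda^{*}$ then $\lambda>\lambda^{*}\geq\lambda(u)$ for every $u\neq0$, so Proposition~\ref{fiberingvariation}(i) forces every $\psi_{\lambda,u}$ into case~III), which has no critical point; thus $\psi_{\lambda,u}'(1)\neq0$ for all $u\neq0$, i.e.\ $\mathcal{N}_{\lambda}=\emptyset$.

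For part~(ii): given $\lambda\in(0,\lambda_{0}^{*})$, take $v$ with $\lambda_{0}(v)=\lambda_{0}^{*}$ from Proposition~\ref{extremalparameter}. Since $\lambda<\lambda_{0}(v)$, Proposition~\ref{fiberingvariation}(ii) gives $\inf_{t>0}\psi_{\lambda,v}(t)<0$, so some $t_{*}>0$ satisfies $\Phi_{\lambda}(t_{*}v)=\psi_{\lambda,v}(t_{*})<0$, and $u:=t_{*}v$ is as required. For the second assertion, take $\lambda\geq\lambda_{0}^{*}$ and $u\neq0$ arbitrary, so $\lambda\geq\lambda_{0}^{*}\geq\lambda_{0}(u)$. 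If $\lambda>\lambda_{0}(u)$, Proposition~\ref{fiberingvariation}(ii) gives $\inf_{t>0}\psi_{\lambda,u}(t)=0$ immediately. It remains to treat the boundary case $\lambda=\lambda_{0}^{*}=\lambda_{0}(u)$: the definition of $\lambda_{0}(u)$ supplies $t_{0}(u)>0$ with $\psi_{\lambda,u}(t_{0}(u))=0$, whence $\inf_{t>0}\psi_{\lambda,u}(t)\leq0$; for the reverse inequality I would let $\lambda'\downarrow\lambda_{0}(u)$ in $\psi_{\lambda',u}(t)\geq0$ (valid for every $\lambda'>\lambda_{0}(u)$ by the case already treated, and $\lambda'\mapsto\psi_{\lambda',u}(t)$ is affine, hence continuous, for fixed $t$), obtaining $\psi_{\lambda,u}(t)\geq0$ for all $t>0$ and therefore $\inf_{t>0}\psi_{\lambda,u}(t)=0$. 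A more hands-on alternative: $\lambda_{0}(u)=C_{0,a,\gamma}\lambda(u)<\lambda(u)$ puts $\psi_{\lambda,u}$ in case~I); since $\gamma>2$, $\psi_{\lambda,u}$ is positive and increasing up to its first critical point $t_{\lambda}^{-}(u)$, so the zero-energy critical point $t_{0}(u)$ must be the local minimum $t_{\lambda}^{+}(u)$, and the case-I shape then gives $\psi_{\lambda,u}\geq0$ on $(0,\infty)$.

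Almost all of this is direct bookkeeping from the earlier propositions; the single delicate point is the boundary value $\lambda=\lambda_{0}^{*}$ in part~(ii), which the strict inequalities in Proposition~\ref{fiberingvariation}(ii) do not literally cover, and which is resolved by the limiting argument (or the case-I shape analysis) above. No fresh compactness is needed, all of it having been expended already in Proposition~\ref{extremalparameter}.
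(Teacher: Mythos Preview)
Your argument is correct and follows essentially the same route as the paper: pick an extremizer via Proposition~\ref{extremalparameter}, feed it into Proposition~\ref{fiberingvariation} to land in case~I) of Proposition~\ref{fibering}, and read off the Nehari membership; for the emptiness/nonnegativity statements, use that $\lambda$ dominates $\lambda(u)$ or $\lambda_0(u)$ for every $u$. Your treatment is in fact slightly more careful than the paper's: you explicitly handle the boundary case $\lambda=\lambda_0^*=\lambda_0(u)$ in part~(ii), which the paper's proof glosses over by writing ``$\lambda>\lambda_0^*\ge\lambda_0(u)$'' even when $\lambda=\lambda_0^*$; your limiting argument (or the case-I shape analysis) closes that small gap.
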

\begin{proof} $i)$ From Proposition \ref{extremalparameter}, there exists $u\in H_0^1(\Omega)\setminus \{0\}$ such that $\lambda(u)=\lambda^*$. It follows from Proposition \ref{fiberingvariation} that for each $\lambda\in (0,\lambda^*)$ the fiber map $\psi_{\lambda,u}$ satisfies $I)$ of Proposition \ref{fibering} and hence $t_\lambda^-(u)u\in \mathcal{N}_\lambda^-$ and $t_\lambda^+(u)u\in \mathcal{N}_\lambda^+$. Now suppose that $\lambda>\lambda^*$, then it follows that $\lambda>\lambda^*\ge \lambda(u)$ for each $u\in H_0^1(\Omega)\setminus \{0\}$, which implies from Proposition \ref{fiberingvariation} that $\psi_{\lambda,u}$ satisfies $III)$ of Proposition \ref{fibering} and hence $\mathcal{N}_\lambda=\emptyset$.
	
	$ii)$ From Proposition \ref{extremalparameter}, there exists $u\in H_0^1(\Omega)\setminus \{0\}$ such that $\lambda(u)=\lambda_0^*$. It follows from Proposition \ref{fiberingvariation} that for each $\lambda\in (0,\lambda_0^*)$, there exists $t>0$ such that $\Phi_\lambda(tu)<0$. Now assume that $\lambda\ge \lambda_0^*$. Therefore $\lambda>\lambda_0^*\ge \lambda_0(u)$ for each $u\in H_0^1(\Omega)\setminus \{0\}$, which implies from Proposition \ref{fiberingvariation} that $\inf_{t>0}\psi_{\lambda,u}(t)=0$.
\end{proof}
From Proposition \ref{Neharimanifoldsandzeroenergy} we obtain the following nonexistence result.
\begin{cor}\label{nonexistence} For each $\lambda>\lambda^*$ the functional $\Phi_\lambda$ does not have critical points other than $u=0$.
\end{cor}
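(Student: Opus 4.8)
The plan is to reduce the statement immediately to the emptiness of the Nehari set established in Proposition \ref{Neharimanifoldsandzeroenergy}(i). Suppose, for contradiction, that $\lambda>\lambda^*$ and that $u\in H_0^1(\Omega)$ is a critical point of $\Phi_\lambda$ with $u\neq 0$. Then $\Phi_\lambda'(u)=0$ in the dual space $H^{-1}(\Omega)$, so in particular $\Phi_\lambda'(u)u=0$. By definition of the fiber map, $\psi_{\lambda,u}'(1)=\Phi_\lambda'(u)u=0$, which is exactly the condition for $u$ to belong to $\mathcal{N}_\lambda$ (recall $\mathcal{N}_\lambda=\{u\in H_0^1(\Omega)\setminus\{0\}:\psi_{\lambda,u}'(1)=0\}$). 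Hence $\mathcal{N}_\lambda\neq\emptyset$.

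On the other hand, Proposition \ref{Neharimanifoldsandzeroenergy}(i) asserts that $\mathcal{N}_\lambda=\emptyset$ whenever $\lambda>\lambda^*$. This contradiction shows that no nonzero critical point can exist, so the only solution is $u=0$.

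If one prefers a self-contained argument rather than citing Proposition \ref{Neharimanifoldsandzeroenergy}(i), one can argue directly from Proposition \ref{fiberingvariation}(i) and the definition \eqref{extremalpara} of $\lambda^*$: for every $v\in H_0^1(\Omega)\setminus\{0\}$ one has $\lambda(v)\le\lambda^*<\lambda$, so $\psi_{\lambda,v}$ falls into case III) of Proposition \ref{fibering}, i.e. $\psi_{\lambda,v}$ is strictly increasing on $(0,\infty)$ and has no critical point; taking $v=u$ and evaluating at $t=1$ gives $\psi_{\lambda,u}'(1)>0$, again contradicting $\Phi_\lambda'(u)u=0$.

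There is essentially no technical obstacle here: the corollary is a direct consequence of the structural analysis of the fiber maps carried out above, the only point to be careful about being the elementary observation that any critical point of $\Phi_\lambda$ in $H_0^1(\Omega)$ lies in $\mathcal{N}_\lambda\cup\{0\}$, since testing the equation $\Phi_\lambda'(u)=0$ against $u$ itself yields $\psi_{\lambda,u}'(1)=0$.
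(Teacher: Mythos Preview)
Your argument is correct and matches the paper's proof exactly: the paper simply observes that for $\lambda>\lambda^*$ one has $\mathcal{N}_\lambda=\emptyset$, which is precisely your reduction via Proposition~\ref{Neharimanifoldsandzeroenergy}(i). Your write-up is more detailed than the paper's one-line proof, but the content is identical.
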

\begin{proof} Indeed, observe that for each $\lambda>\lambda^*$ there holds $\mathcal{N}_\lambda=\emptyset$.
\end{proof}
Now we turn our attention to some estimates which will prove to be useful on the next section. We start with:
\begin{prop}\label{distanceorigin} Suppose that $\lambda\in (0,\lambda^*]$, then there exists $r_\lambda>0$ such that $\|u\|\ge r_\lambda$ for each $u\in \mathcal{N}_\lambda$.
\end{prop}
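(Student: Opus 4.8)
The plan is to show that if $u \in \mathcal{N}_\lambda$ then $\|u\|$ cannot be arbitrarily small, by playing off the Nehari identity against the Sobolev embedding. Recall that $u \in \mathcal{N}_\lambda$ means $\psi'_{\lambda,u}(1) = 0$, i.e.
\begin{equation*}
a\|u\|^2 + \lambda\|u\|^4 = \|u\|_\gamma^\gamma.
\end{equation*}
In particular $a\|u\|^2 \le \|u\|_\gamma^\gamma$, so $a\|u\|^2 \le S_\gamma^{-\gamma}\|u\|^\gamma$ where $S_\gamma$ is the best constant in $H_0^1(\Omega) \hookrightarrow L^\gamma(\Omega)$ (equivalently, just invoke $\|u\|_\gamma \le C\|u\|$ for some embedding constant $C$). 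Since $\gamma > 2$ and $\|u\| > 0$ for $u \in \mathcal{N}_\lambda$, dividing by $\|u\|^2$ yields $a \le C^\gamma \|u\|^{\gamma-2}$, hence $\|u\| \ge (a/C^\gamma)^{1/(\gamma-2)} =: r$. This bound is in fact uniform in $\lambda$ (it only used the $a\|u\|^2$ term), so one may even take $r_\lambda = r$ independent of $\lambda \in (0,\lambda^*]$; I would state it with the $\lambda$-dependence as in the proposition for safety, but point out the uniformity since it will likely be used later.

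The key step — and the only one requiring any thought — is simply the observation that the nonlocal term $\lambda\|u\|^4$ is nonnegative and can be discarded from the Nehari identity to get the clean inequality $a\|u\|^2 \le \|u\|_\gamma^\gamma$; everything after that is the standard Nehari-manifold "bounded away from zero" argument driven by superlinearity ($\gamma > 2$). I don't anticipate a genuine obstacle here: there is no need to distinguish $\mathcal{N}_\lambda^+$, $\mathcal{N}_\lambda^0$, $\mathcal{N}_\lambda^-$, and no compactness is involved. One small point to be careful about is invoking the right normalization of the Sobolev constant consistently with the paper's convention for $S_\gamma$ (stated just after the display for $\lambda^*$ in the introduction), so that the constant $r_\lambda$ is expressed correctly; but this is cosmetic. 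I would write the argument in three or four lines: cite the Nehari identity, drop the $\lambda$-term, apply the embedding, divide by $\|u\|^2$, and solve for $\|u\|$.
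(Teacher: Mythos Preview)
Your proposal is correct and matches the paper's own argument almost verbatim: the paper writes the single line $a\|u\|^2+\lambda\|u\|^4-C\|u\|^\gamma\le a\|u\|^2+\lambda\|u\|^4-\|u\|_\gamma^\gamma=0$ (with $C$ from the Sobolev embedding) and declares the result straightforward. Your observation that the bound is actually uniform in $\lambda$ is correct and worth noting, since the paper does use this implicitly later (e.g.\ in the proof of Lemma~\ref{GMB00}).
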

\begin{proof} The existence of $r_\lambda$ is straightforward from
	\begin{equation*}
	a\|u\|^2+\lambda\|u\|^4-C\|u\|^\gamma\le a\|u\|^2+\lambda\|u\|^4-\|u\|_\gamma^\gamma=0,\ \forall\ u\in \mathcal{N}_\lambda,
	\end{equation*}
	where $C>0$ comes from the Sobolev embedding.	
\end{proof}
\begin{prop}\label{n0} For each $\lambda\in(0,\lambda^*]$, there holds
	\begin{equation*}
	\Phi_\lambda(u)=\frac{(\gamma-2)^2}{4\gamma(4-\gamma)}\frac{a^2}{\lambda},\ \forall\ u\in \mathcal{N}_\lambda^0.
	\end{equation*}
\end{prop}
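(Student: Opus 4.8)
The statement is purely computational: everything follows by extracting the defining relations of $\mathcal{N}_\lambda^0$ and substituting them into the expression for $\Phi_\lambda$. So the plan is as follows (and if $\mathcal{N}_\lambda^0=\emptyset$ the claim is vacuous, so assume $u\in\mathcal{N}_\lambda^0$).

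First I would unwind the definition. Since $u\in\mathcal{N}_\lambda^0$ means $\psi'_{\lambda,u}(1)=0$ and $\psi''_{\lambda,u}(1)=0$, and since
\begin{equation*}
\psi'_{\lambda,u}(t)=at\|u\|^2+\lambda t^3\|u\|^4-t^{\gamma-1}\|u\|_\gamma^\gamma,\qquad \psi''_{\lambda,u}(t)=a\|u\|^2+3\lambda t^2\|u\|^4-(\gamma-1)t^{\gamma-2}\|u\|_\gamma^\gamma,
\end{equation*}
evaluating at $t=1$ gives exactly the system \eqref{extremal} with $t=1$:
\begin{equation*}
a\|u\|^2+\lambda\|u\|^4-\|u\|_\gamma^\gamma=0,\qquad a\|u\|^2+3\lambda\|u\|^4-(\gamma-1)\|u\|_\gamma^\gamma=0.
\end{equation*}
I would then treat this as a linear system in the quantities $a\|u\|^2$, $\lambda\|u\|^4$, $\|u\|_\gamma^\gamma$. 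Subtracting the first equation from the second yields $2\lambda\|u\|^4=(\gamma-2)\|u\|_\gamma^\gamma$; eliminating $\lambda\|u\|^4$ between the two equations yields $\|u\|_\gamma^\gamma=\frac{2a}{4-\gamma}\|u\|^2$, and hence $\lambda\|u\|^4=\frac{\gamma-2}{4-\gamma}a\|u\|^2$, i.e. $\|u\|^2=\dfrac{a(\gamma-2)}{\lambda(4-\gamma)}$. (Here $\gamma\in(2,4)$ makes all coefficients positive, consistent with $\|u\|\neq 0$.)

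Finally I would substitute these identities into
\begin{equation*}
\Phi_\lambda(u)=\frac a2\|u\|^2+\frac\lambda4\|u\|^4-\frac1\gamma\|u\|_\gamma^\gamma
=a\|u\|^2\left(\frac12+\frac{\gamma-2}{4(4-\gamma)}-\frac{2}{\gamma(4-\gamma)}\right),
\end{equation*}
and compute the bracket over the common denominator $4\gamma(4-\gamma)$: its numerator is $2\gamma(4-\gamma)+\gamma(\gamma-2)-8=-\gamma^2+6\gamma-8=(\gamma-2)(4-\gamma)$, so the bracket collapses to $\frac{\gamma-2}{4\gamma}$. Thus $\Phi_\lambda(u)=\frac{\gamma-2}{4\gamma}a\|u\|^2$, and inserting $\|u\|^2=\frac{a(\gamma-2)}{\lambda(4-\gamma)}$ gives $\Phi_\lambda(u)=\frac{(\gamma-2)^2}{4\gamma(4-\gamma)}\frac{a^2}{\lambda}$, as claimed. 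There is no genuine obstacle here beyond bookkeeping; the only point requiring a little care is the factorization of the degree-two polynomial in $\gamma$ that produces the cancellation.
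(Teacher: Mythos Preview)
Your proof is correct and follows essentially the same route as the paper: both solve the two defining equations of $\mathcal{N}_\lambda^0$ for $\|u\|^2$ (obtaining $\|u\|^2=\frac{(\gamma-2)a}{(4-\gamma)\lambda}$) and then substitute into $\Phi_\lambda(u)$, with only cosmetic differences in how the algebra is organized. The paper's second equation $2a\|u\|^2+4\lambda\|u\|^4-\gamma\|u\|_\gamma^\gamma=0$ is just the sum of your two equations, so the systems are equivalent.
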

\begin{proof} In fact, if $u\in \mathcal{N}_\lambda^0$, then
	\begin{equation}\label{E1}
	\left\{
	\begin{aligned}
	a\|u\|^2+\lambda \|u\|^4-\|u\|_\gamma^\gamma  &= 0, \\
	2a\|u\|^2+4\lambda \|u\|^4-\gamma\|u\|_\gamma^\gamma  &= 0.
	\end{aligned}
	\right.
	\end{equation}
	It follows from (\ref{E1}) that
	\begin{equation}\label{E2}
	\|u\|^2=\frac{\gamma-2}{4-\gamma}\frac{a}{\lambda}.
	\end{equation}	
	Moreover, from (\ref{E1}) we also have that 
	\begin{equation}\label{E3}
	\Phi_\lambda(u)=\frac{\gamma-2}{2\gamma}a\|u\|^2-\frac{4-\gamma}{4\gamma}\lambda\|u\|^4, \forall\ u\in \mathcal{N}_\lambda^0.
	\end{equation}
	We combine (\ref{E2}) with (\ref{E3}) to prove the proposition.
\end{proof}
We conclude this Section with some variational properties related to the functional $\Phi_\lambda$.
\begin{lem}\label{variatonal} For each $\lambda\in (0,\lambda^*)$ there holds
	\begin{enumerate}
		\item[i)] The functional $\Phi_\lambda$ is weakly lower semi-continuous and coercive.
		\item[ii)] Suppose that $u_n$ is a Palais-Smale sequence at the level $c\in\mathbb{R}$, that is $\Phi_\lambda(u_n)\to c$ and $\Phi_\lambda'(u_n)\to 0$ as $n\to \infty$, then $u_n$ converge strongly to some $u$.
		\item[iii)] There exist $C_\lambda>0$ and $\rho_\lambda>0$ satisfying
		\begin{equation*}
		\Phi_\lambda(u)\ge C_\lambda,\ \forall\ u\in H_0^1(\Omega),\ \|u\|=\rho_\lambda,
		\end{equation*}
		and
		\begin{equation*}
		\lim_{C_\lambda\to 0}\rho_\lambda=0.
		\end{equation*}
	\end{enumerate}
\end{lem}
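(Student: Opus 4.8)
The plan is to dispatch the three items separately, leaning on two structural features of the problem: the subcriticality $\gamma\in(2,4)\subset(2,6)$ yields a compact embedding $H_0^1(\Omega)\hookrightarrow L^\gamma(\Omega)$, and the power $4$ of the nonlocal term strictly dominates $\gamma$, so that $\frac{\lambda}{4}\|u\|^4$ overwhelms $\frac{1}{\gamma}\|u\|_\gamma^\gamma$ for large $\|u\|$. Fix $S>0$ with $\|u\|_\gamma\le S\|u\|$.

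For i), I would first bound
\[
\Phi_\lambda(u)\ \ge\ \frac{a}{2}\|u\|^2+\frac{\lambda}{4}\|u\|^4-\frac{S^\gamma}{\gamma}\|u\|^\gamma ,
\]
and note that, since $\gamma<4$, the right side tends to $+\infty$ with $\|u\|$, giving coercivity. For weak lower semicontinuity I would write $\Phi_\lambda=\Psi_1+\Psi_2$ with $\Psi_1(u)=\frac{a}{2}\|u\|^2+\frac{\lambda}{4}\|u\|^4$ and $\Psi_2(u)=-\frac{1}{\gamma}\|u\|_\gamma^\gamma$: the map $\Psi_1$ is a continuous nondecreasing function of $\|u\|$ and $\|\cdot\|$ is weakly lower semicontinuous, hence so is $\Psi_1$; $\Psi_2$ is weakly continuous because $u_n\rightharpoonup u$ in $H_0^1(\Omega)$ forces $u_n\to u$ in $L^\gamma(\Omega)$ by the compact embedding. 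A sum of a weakly lower semicontinuous functional and a weakly continuous one is weakly lower semicontinuous.

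For ii), coercivity (item i)) makes any Palais--Smale sequence $u_n$ at level $c$ bounded, so up to a subsequence $u_n\rightharpoonup u$ in $H_0^1(\Omega)$ and $u_n\to u$ in $L^\gamma(\Omega)$. I would then test $\Phi_\lambda'(u_n)$, which acts by $\Phi_\lambda'(v)h=(a+\lambda\|v\|^2)\int\nabla v\cdot\nabla h-\int|v|^{\gamma-2}vh$, against $u_n-u$: the term $\Phi_\lambda'(u_n)(u_n-u)$ vanishes in the limit since $\Phi_\lambda'(u_n)\to0$ and $\{u_n-u\}$ is bounded, and $\int|u_n|^{\gamma-2}u_n(u_n-u)\to0$ because $\{|u_n|^{\gamma-2}u_n\}$ is bounded in $L^{\gamma/(\gamma-1)}(\Omega)$ while $u_n-u\to0$ in $L^\gamma(\Omega)$. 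Hence $(a+\lambda\|u_n\|^2)\int\nabla u_n\cdot\nabla(u_n-u)\to0$, and dividing by the coefficient, which is $\ge a>0$, gives $\int\nabla u_n\cdot\nabla(u_n-u)\to0$. Combined with $\int\nabla u\cdot\nabla(u_n-u)\to0$ (weak convergence) this yields $\|u_n-u\|^2\to0$, i.e. $u_n\to u$ strongly.

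For iii), the estimate from i) gives $\Phi_\lambda(u)\ge g(\|u\|)$ with $g(\rho)=\frac{a}{2}\rho^2-\frac{S^\gamma}{\gamma}\rho^\gamma$. Since $\gamma>2$, $g(0)=0$ and $g'(\rho)=a\rho-S^\gamma\rho^{\gamma-1}>0$ for $\rho$ small, so there is $\rho_1>0$ such that $g$ maps $[0,\rho_1]$ homeomorphically onto $[0,g(\rho_1)]$; for a prescribed $C_\lambda\in(0,g(\rho_1)]$ I would take $\rho_\lambda=g^{-1}(C_\lambda)\in(0,\rho_1]$, so that $\Phi_\lambda(u)\ge g(\rho_\lambda)=C_\lambda$ whenever $\|u\|=\rho_\lambda$, and $\rho_\lambda\to0$ as $C_\lambda\to0$ by continuity of $g^{-1}$. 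The proof is essentially routine Nehari/Palais--Smale bookkeeping, and I do not anticipate a real obstacle; the only delicate point is the strong-convergence step in ii), where one must exploit that the Kirchhoff coefficient $a+\lambda\|u_n\|^2$ stays bounded away from $0$ so as to divide it out, and, more structurally, the whole argument rests on the coercivity in i), which is precisely where the hypothesis $\gamma<4$ enters.
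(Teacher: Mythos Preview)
Your argument is correct and follows the same route as the paper's proof, which is extremely terse (declaring i) ``obvious'' and iii) an immediate consequence of the Sobolev estimate $\Phi_\lambda(u)\ge \frac{a}{2}\|u\|^2+\frac{\lambda}{4}\|u\|^4-\frac{C}{\gamma}\|u\|^\gamma$); you have simply written out the details that the paper omits. In particular your handling of ii)---testing $\Phi_\lambda'(u_n)$ against $u_n-u$, killing the nonlinear term via the compact embedding, and dividing out the Kirchhoff coefficient since it is $\ge a>0$---is exactly what the paper has in mind when it says the displayed limit ``easily implies'' strong convergence.
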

\begin{proof} $i)$ is obvious. To prove $ii)$, observe from $i)$ that $u_n$ is bounded and therefore we can assume that $u_n\rightharpoonup u$ in $H_0^1(\Omega)$ and $u_n\to u$ in $L^\gamma(\Omega)$. From the limit  $\Phi_\lambda'(u_n)\to 0$ as $n\to \infty$ we infer that
	\begin{equation*}
	\limsup_{n\to \infty}[-(a+\lambda\|u_n\|^2)\Delta u_n(u_n-u)]=\limsup_{n\to \infty}|u_n|^{\gamma-2}u_n(u_n-u)=0,
	\end{equation*}
	which easily implies that $u_n \to u$ in $H_0^1(\Omega)$.
	
	$iii)$ It follows from the inequality
	\begin{equation*}
	\Phi_\lambda(u)\ge \frac{a}{2}\|u\|^2+\frac{\lambda}{4}\|u\|^4-\frac{C}{\gamma}\|u\|^\gamma,\ \forall\ H_0^1(\Omega),
	\end{equation*}
	where the constant $C$ is positive.
\end{proof}
\section{Local Minimizers for $\Phi_\lambda$}$\label{S3}$
In this section we prove the following 
\begin{prop}\label{GMl0} For each $\lambda\in(0,\lambda^*)$ the functional $\Phi_\lambda$ has a local minimizer $u_\lambda\in H_0^1(\Omega)\setminus\{0\}$. Moreover, if $\lambda\in(0,\lambda_0^*)$ then $\Phi_\lambda(u_\lambda)<0$ while $\Phi_{\lambda_0^*}(u_{\lambda_0^*})=0$ and if $\lambda\in(\lambda_0^*,\lambda^*)$ then $\Phi_\lambda(u_\lambda)>0$.
\end{prop}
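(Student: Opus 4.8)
The plan is to obtain $u_\lambda$ as a minimizer of $\Phi_\lambda$ on the manifold $\mathcal{N}_\lambda^+$ and then read off the sign of the minimal energy in the three ranges. By Proposition \ref{Neharimanifoldsandzeroenergy}(i), $\mathcal{N}_\lambda^+\neq\emptyset$ for every $\lambda\in(0,\lambda^*)$, and $c_\lambda^+:=\inf_{\mathcal{N}_\lambda^+}\Phi_\lambda$ is finite by the coercivity in Lemma \ref{variatonal}(i). Write $M_\lambda:=\frac{(\gamma-2)^2}{4\gamma(4-\gamma)}\frac{a^2}{\lambda}$ for the constant value of $\Phi_\lambda$ on $\mathcal{N}_\lambda^0$ (Proposition \ref{n0}). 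The decisive preliminary estimate is the \emph{strict} inequality $c_\lambda^+<M_\lambda$. To prove it I would fix $v$ with $\|v\|=1$ and $\lambda(v)=\lambda^*$ (Proposition \ref{extremalparameter}), let $t^-_\mu(v)<t^+_\mu(v)$ be the two critical points of $\psi_{\mu,v}$ for $\mu\in(0,\lambda^*)$ (Proposition \ref{fibering}(I)), and set $m(\mu):=\Phi_\mu\bigl(t^+_\mu(v)\,v\bigr)$. Since $\partial_\mu\Phi_\mu(w)=\tfrac{1}{4}\|w\|^4$ and $t^+_\mu(v)$ is a critical point of $\psi_{\mu,v}$, the envelope identity gives $m'(\mu)=\tfrac{1}{4}\bigl(t^+_\mu(v)\bigr)^4>0$, so $m$ is strictly increasing; moreover $t^+_\mu(v)\to t(v)$ as $\mu\uparrow\lambda^*$ (the two critical points coalesce at $\lambda(v)$, cf.\ \eqref{extremal} and \eqref{tu}), so $m(\mu)\uparrow\Phi_{\lambda^*}(t(v)v)=M_{\lambda^*}$ because $t(v)v\in\mathcal{N}_{\lambda^*}^0$. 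As $t^+_\lambda(v)v\in\mathcal{N}_\lambda^+$ and $M_{\lambda^*}\le M_\lambda$ for $\lambda\le\lambda^*$, this yields $c_\lambda^+\le m(\lambda)<M_{\lambda^*}\le M_\lambda$ for every $\lambda\in(0,\lambda^*)$.

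With this gap in hand I would establish compactness of minimizing sequences. Choosing $\varepsilon>0$ with $c_\lambda^+<M_\lambda-\varepsilon$, a minimizing sequence eventually lies in $K_\varepsilon:=\{u\in\mathcal{N}_\lambda^+:\Phi_\lambda(u)\le M_\lambda-\varepsilon\}$, which is closed in $H_0^1(\Omega)$: by continuity of $u\mapsto\psi'_{\lambda,u}(1)$ and $u\mapsto\psi''_{\lambda,u}(1)$ a limit point lies in $\mathcal{N}_\lambda^+\cup\mathcal{N}_\lambda^0$, it is nonzero by Proposition \ref{distanceorigin}, and it cannot be in $\mathcal{N}_\lambda^0$ where $\Phi_\lambda=M_\lambda$. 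Thus $K_\varepsilon$ is a complete metric space inside the $C^1$ manifold $\mathcal{N}_\lambda^+$ (Lemma \ref{Neharimanifold}), so Ekeland's variational principle yields a minimizing sequence $u_n\in\mathcal{N}_\lambda^+$ which — the Nehari constraint being natural, Lemma \ref{Neharimanifold} — is a Palais--Smale sequence for $\Phi_\lambda$; by Lemma \ref{variatonal}(ii), $u_n\to u_\lambda$ strongly with $\Phi_\lambda(u_\lambda)=c_\lambda^+$. Hence $u_\lambda\neq0$ (Proposition \ref{distanceorigin}), $\psi'_{\lambda,u_\lambda}(1)=0$, $\psi''_{\lambda,u_\lambda}(1)\ge0$, and the gap forbids equality, so $u_\lambda\in\mathcal{N}_\lambda^+$ is a critical point of $\Phi_\lambda$ (Lemma \ref{Neharimanifold}). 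That $u_\lambda$ is a local minimizer of $\Phi_\lambda$ on $H_0^1(\Omega)$ follows because $\lambda<\lambda(u_\lambda)$ (Proposition \ref{fiberingvariation}(i)), hence $\lambda<\lambda(w)$ for $w$ near $u_\lambda$; the critical points $t^\pm_\lambda(w)$ of $t\mapsto\Phi_\lambda(tw)$ depend continuously on $w$ (implicit function theorem, using $\psi''_{\lambda,w}(t^\pm_\lambda(w))\neq0$), and $t^+_\lambda(u_\lambda)=1>t^-_\lambda(u_\lambda)$ gives $t^-_\lambda(w)<1$ for $w$ close, so $\Phi_\lambda(w)=\psi_{\lambda,w}(1)\ge\Phi_\lambda\bigl(t^+_\lambda(w)w\bigr)\ge c_\lambda^+$, since $\psi_{\lambda,w}$ attains its minimum over $[t^-_\lambda(w),\infty)$ at $t^+_\lambda(w)$ and $t^+_\lambda(w)w\in\mathcal{N}_\lambda^+$.

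It remains to pin down the sign of $c_\lambda^+$. Minimizing $\Phi_\lambda$ along each ray gives $\inf_{H_0^1(\Omega)}\Phi_\lambda=\min\{0,c_\lambda^+\}$, since for $u\neq0$ either $\psi_{\lambda,u}$ is increasing (so $\inf_t\psi_{\lambda,u}=0$) or it is of type I with $\inf_t\psi_{\lambda,u}=\min\{0,\Phi_\lambda(t^+_\lambda(u)u)\}$ and $t^+_\lambda(u)u\in\mathcal{N}_\lambda^+$. If $\lambda\in(0,\lambda_0^*)$, Proposition \ref{Neharimanifoldsandzeroenergy}(ii) gives $\inf_{H_0^1(\Omega)}\Phi_\lambda<0$, hence $c_\lambda^+<0$ and $\Phi_\lambda(u_\lambda)<0$ (so $u_\lambda$ is in fact a global minimizer). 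If $\lambda=\lambda_0^*$, the same identity and Proposition \ref{Neharimanifoldsandzeroenergy}(ii) give $c_{\lambda_0^*}^+\ge0$, while for $v$ with $\lambda_0(v)=\lambda_0^*$ (Proposition \ref{extremalparameter}) the point $t_0(v)v$ of \eqref{zeroenergy} lies in $\mathcal{N}_{\lambda_0^*}^+$ with $\Phi_{\lambda_0^*}(t_0(v)v)=0$ — the zero-energy critical point of that fiber being necessarily its local minimum, since the local maximum has positive energy — so $c_{\lambda_0^*}^+=0=\Phi_{\lambda_0^*}(u_{\lambda_0^*})$. If $\lambda\in(\lambda_0^*,\lambda^*)$, every $u\in\mathcal{N}_\lambda^+$ equals $t^+_\lambda(w)w$ for some $\|w\|=1$ with $\lambda_0(w)\le\lambda_0^*<\lambda<\lambda(w)$, whence by Proposition \ref{fiberingvariation}(ii) the local-minimum value $\Phi_\lambda(u)$ of $\psi_{\lambda,w}$ is strictly positive; thus $\Phi_\lambda>0$ on $\mathcal{N}_\lambda^+$ and $\Phi_\lambda(u_\lambda)=c_\lambda^+>0$.

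The crux of the argument — and the only point where $\lambda>\lambda_0^*$ genuinely bites — is the compactness step, i.e.\ preventing a minimizing sequence in $\mathcal{N}_\lambda^+$ from sliding onto $\mathcal{N}_\lambda^0$, where $\Phi_\lambda$ is constant and whose elements need not be nondegenerate critical points. This is precisely what forces the finer estimate of Proposition \ref{n0}, coupled with the monotonicity of $m$, to produce the strict energy gap $c_\lambda^+<M_\lambda$; once that is available, the strong convergence via Ekeland and Lemma \ref{variatonal}(ii), the local-minimality, and the sign computation are routine.
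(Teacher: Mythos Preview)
Your argument is correct and reaches the same destination as the paper, but by a somewhat different and in places more streamlined path.

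The paper treats the three ranges separately: for $\lambda\in(0,\lambda_0^*)$ it takes a global minimizer directly (coercivity plus $\inf\Phi_\lambda<0$); for $\lambda=\lambda_0^*$ it passes to the limit $\lambda_n\uparrow\lambda_0^*$ along the global minimizers; and for $\lambda\in(\lambda_0^*,\lambda^*)$ it minimizes over the closed set $\mathcal{N}_\lambda^+\cup\mathcal{N}_\lambda^0$ by a \emph{direct} weak-compactness argument (Lemma~\ref{ekeland}): if the weak limit $u$ were not strong, one would get $\psi'_{\lambda,u}(1)<0$, hence $t_\lambda^+(u)>1$ and $\Phi_\lambda(t_\lambda^+(u)u)<\hat{\Phi}_\lambda$, a contradiction. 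The strict gap $\hat{\Phi}_\lambda<M_\lambda$ (Proposition~\ref{nearzeroenergy}) is obtained in the paper via the special function $u_{\lambda_0^*}$ already constructed at $\lambda_0^*$, together with the monotonicity of $t_\lambda^\pm(u_{\lambda_0^*})$ from the Appendix. Finally, the positivity of $\hat{\Phi}_\lambda$ for $\lambda>\lambda_0^*$ is deduced from continuity and monotonicity of $\lambda\mapsto\hat{\Phi}_\lambda$ (Proposition~\ref{contidecre}).

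Your version is more uniform: you run the same Nehari-minimization for all $\lambda\in(0,\lambda^*)$, derive the gap $c_\lambda^+<M_\lambda$ directly from any extremal $v$ with $\lambda(v)=\lambda^*$ via the envelope identity $m'(\mu)=\tfrac14(t_\mu^+(v))^4$, and read off the sign of $c_\lambda^+$ straight from the uniqueness in Proposition~\ref{fiberingvariation}(ii) rather than through Proposition~\ref{contidecre}. You also make explicit the local-minimality argument that the paper leaves implicit. The one place where the paper's route is lighter is compactness: its direct minimization avoids Ekeland and the attendant Lagrange-multiplier bookkeeping. Your Ekeland step is fine, but note that to pass from a constrained almost-critical sequence to an unconstrained Palais--Smale sequence you need $\psi''_{\lambda,u_n}(1)$ bounded away from zero on $K_\varepsilon$; this holds because on $\mathcal{N}_\lambda$ both $\Phi_\lambda(u)$ and $\psi''_{\lambda,u}(1)$ are explicit functions of $\|u\|^2$, with $\psi''=0$ exactly where $\Phi_\lambda=M_\lambda$, but it is worth stating.
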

\begin{rem} In fact if $\lambda\in(0,\lambda_0^*]$ then the local minimizer given by the Lemma \ref{GMBl0} is a global minimizer.
\end{rem}
We divide the proof of Proposition \ref{GMl0} in some Lemmas.
\begin{lem}\label{GMBl0} For each $\lambda\in(0,\lambda_0^*)$ the functional $\Phi_\lambda$ has a global minimizer $u_\lambda$ with negative energy.
\end{lem}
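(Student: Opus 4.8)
The plan is to apply the direct method of the calculus of variations directly on the whole space $H_0^1(\Omega)$, using the two structural facts already established: that $\Phi_\lambda$ is coercive and weakly lower semi-continuous for $\lambda\in(0,\lambda^*)$ (Lemma \ref{variatonal}~$i)$), and that for $\lambda\in(0,\lambda_0^*)$ the functional attains negative values somewhere (Proposition \ref{Neharimanifoldsandzeroenergy}~$ii)$). First I would set
\[
m_\lambda:=\inf_{u\in H_0^1(\Omega)}\Phi_\lambda(u),
\]
and pick a minimizing sequence $(u_n)\subset H_0^1(\Omega)$, i.e. $\Phi_\lambda(u_n)\to m_\lambda$. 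Since $\Phi_\lambda(u_n)$ is bounded and $\Phi_\lambda$ is coercive, $(u_n)$ is bounded in $H_0^1(\Omega)$, so up to a subsequence $u_n\rightharpoonup u_\lambda$ weakly in $H_0^1(\Omega)$ (and $u_n\to u_\lambda$ strongly in $L^\gamma(\Omega)$ by the compact Sobolev embedding, should it be needed). Weak lower semi-continuity then gives $\Phi_\lambda(u_\lambda)\le\liminf_{n\to\infty}\Phi_\lambda(u_n)=m_\lambda$, hence $\Phi_\lambda(u_\lambda)=m_\lambda$ and $u_\lambda$ is a global minimizer.

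Next I would show $m_\lambda<0$, which simultaneously forces the minimizer to be nontrivial and to have negative energy. Indeed, by Proposition \ref{Neharimanifoldsandzeroenergy}~$ii)$, for $\lambda\in(0,\lambda_0^*)$ there exists $v\in H_0^1(\Omega)\setminus\{0\}$ with $\Phi_\lambda(v)<0$; therefore $m_\lambda\le\Phi_\lambda(v)<0=\Phi_\lambda(0)$. Consequently $u_\lambda\neq 0$ and $\Phi_\lambda(u_\lambda)=m_\lambda<0$, which is exactly the assertion. (If desired, one also records here that, being a global minimizer of the $C^1$ functional $\Phi_\lambda$ on the open set $H_0^1(\Omega)$, the point $u_\lambda$ satisfies $\Phi_\lambda'(u_\lambda)=0$, so $u_\lambda\in\mathcal N_\lambda$ and is a weak solution of \eqref{p}; and replacing $u_\lambda$ by $|u_\lambda|$, which leaves $\|u_\lambda\|$, $\|u_\lambda\|_\gamma$ and hence $\Phi_\lambda$ unchanged, one may take $u_\lambda\ge 0$.)

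There is essentially no hard obstacle in this argument: coercivity and weak lower semi-continuity do all the work for existence, and Proposition \ref{Neharimanifoldsandzeroenergy}~$ii)$ supplies the sign. The only point deserving a word of care is the verification that the candidate limit is not the zero function — but this is automatic once $m_\lambda<0$ is known, so the negative-energy test function from Proposition \ref{Neharimanifoldsandzeroenergy}~$ii)$ is the key input. If one wants to avoid invoking $i)$ of Lemma \ref{variatonal} as a black box, the weak lower semi-continuity can be seen term-by-term: $\|\cdot\|^2$ and $\|\cdot\|^4$ are weakly lower semi-continuous, while $\|u_n\|_\gamma^\gamma\to\|u_\lambda\|_\gamma^\gamma$ by compactness, and coercivity follows from $\Phi_\lambda(u)\ge\frac a2\|u\|^2+\frac\lambda4\|u\|^4-\frac{C}{\gamma}\|u\|^\gamma$ with $\gamma<4$.
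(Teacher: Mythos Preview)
Your proof is correct and follows exactly the route the paper intends: the paper's own proof is the single sentence ``It is a consequence of Lemma \ref{variatonal} and Proposition \ref{Neharimanifoldsandzeroenergy},'' and you have merely spelled out the standard direct-method details behind that sentence.
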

\begin{proof} It is a consequence of Lemma \ref{variatonal} and Proposition \ref{Neharimanifoldsandzeroenergy}.
\end{proof}
\begin{lem}\label{GMB00} The functional $\Phi_{\lambda_0^*}$ has a global minimizer $u_{\lambda_0^*}\neq 0$ with zero energy.
\end{lem}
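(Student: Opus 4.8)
The plan is to read off the nonzero minimizer directly from the point that maximizes $\lambda_0(\cdot)$. First I would apply Proposition \ref{Neharimanifoldsandzeroenergy}(ii) with $\lambda=\lambda_0^*$: it gives $\inf_{t>0}\psi_{\lambda_0^*,u}(t)=0$ for every $u\in H_0^1(\Omega)\setminus\{0\}$, and since $\Phi_{\lambda_0^*}(0)=0$ this shows $\Phi_{\lambda_0^*}\ge 0$ on all of $H_0^1(\Omega)$, so that $\inf_{H_0^1(\Omega)}\Phi_{\lambda_0^*}=0$. Hence it suffices to produce a single nonzero point at which $\Phi_{\lambda_0^*}$ vanishes.

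Next I would invoke Proposition \ref{extremalparameter} to choose $u\in H_0^1(\Omega)\setminus\{0\}$ with $\lambda_0(u)=\lambda_0^*$, and let $t_0(u)>0$ be the value coming from the pair $(t_0(u),\lambda_0(u))$ that solves system \eqref{zeroenergy}. Multiplying the first equation of \eqref{zeroenergy} by $t_0(u)^2$ yields precisely $\Phi_{\lambda_0(u)}(t_0(u)u)=0$, and since $\lambda_0(u)=\lambda_0^*$ the point $u_{\lambda_0^*}:=t_0(u)u$ satisfies $\Phi_{\lambda_0^*}(u_{\lambda_0^*})=0$. As $t_0(u)>0$ and $u\neq 0$ we have $u_{\lambda_0^*}\neq 0$, and by the first step it is a global minimizer of $\Phi_{\lambda_0^*}$ with zero energy, as claimed. (The second equation of \eqref{zeroenergy} additionally gives $\psi'_{\lambda_0^*,u}(t_0(u))=0$, so in fact $u_{\lambda_0^*}\in\mathcal{N}_{\lambda_0^*}^0$, though this is not needed for the statement.)

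I do not expect a genuine obstacle at this stage: the only nontrivial ingredient is that the supremum \eqref{zeroenergypara} is attained, and this is exactly the content of Proposition \ref{extremalparameter}, established via the $0$-homogeneity of $\lambda(\cdot)$, the weak lower semicontinuity of the norm, and the compact embedding $H_0^1(\Omega)\hookrightarrow L^\gamma(\Omega)$ (which upgrades a normalized maximizing sequence to strong convergence). Everything else is bookkeeping with the explicit formulas \eqref{tu}, \eqref{extrefunction} and the identity $\lambda_0(u)=C_{0,a,\gamma}\lambda(u)$.
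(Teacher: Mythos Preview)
Your argument is correct and takes a genuinely different route from the paper. The paper proceeds by a limiting argument: it picks global minimizers $u_{\lambda_n}$ for $\lambda_n\uparrow\lambda_0^*$ (supplied by Lemma~\ref{GMBl0}), uses coercivity to get boundedness, extracts a weak limit, invokes weak lower semicontinuity together with Proposition~\ref{Neharimanifoldsandzeroenergy}(ii) to pin the limiting energy at $0$, upgrades to strong convergence, and finally appeals to Proposition~\ref{distanceorigin} to rule out the zero limit. Your approach bypasses all of this compactness machinery by reading the minimizer off directly from the extremal function of Proposition~\ref{extremalparameter}: once $\lambda_0(u)=\lambda_0^*$ is attained, the very system~\eqref{zeroenergy} hands you $t_0(u)$ with $\Phi_{\lambda_0^*}(t_0(u)u)=0$. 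This is shorter and more transparent, and it makes Remark~\ref{Rm1} (that $\lambda_0(u_{\lambda_0^*})=\lambda_0^*$) immediate by construction rather than a consequence. The paper's route, on the other hand, has the mild advantage of exhibiting $u_{\lambda_0^*}$ as a strong limit of the minimizers $u_\lambda$ for $\lambda<\lambda_0^*$, which fits the bifurcation picture the paper is building.

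One small correction to your parenthetical: from $\psi'_{\lambda_0^*,u}(t_0(u))=0$ you only get $u_{\lambda_0^*}\in\mathcal{N}_{\lambda_0^*}$, not $\mathcal{N}_{\lambda_0^*}^0$. In fact, since $\lambda_0^*=\lambda_0(u)<\lambda(u)=\lambda^*$, Proposition~\ref{fiberingvariation}(i) says $\psi_{\lambda_0^*,u}$ is of type I), and $t_0(u)$ is the local minimum $t_{\lambda_0^*}^+(u)$ with $\psi''>0$; hence $u_{\lambda_0^*}\in\mathcal{N}_{\lambda_0^*}^+$. As you note, this does not affect the statement being proved.
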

\begin{proof} Suppose that $\lambda_n\uparrow\lambda_0^*$ as $n\to \infty$ and for each $n$ choose $u_n\equiv u_{\lambda_n}$, where $u_{\lambda_n}$ is given by Lemma \ref{GMBl0}. From the inequality $\Phi_{\lambda_n}(u_n)<0$ for each $n$ and Lemma \ref{variatonal} we obtain that $u_n$ is bounded. Therefore we can assume that $u_n\rightharpoonup u$ in $H_0^1(\Omega)$ and $u_n \to u$ in $L^\gamma(\Omega)$. From Lemma \ref{variatonal} we have that
	\begin{equation*}
	\Phi_{\lambda_0^*}(u)\leq\liminf_{n\to \infty}\Phi_{\lambda_n}(u_n)\le 0.
	\end{equation*}
	From Proposition \ref{Neharimanifoldsandzeroenergy} we conclude that $\Phi_{\lambda_0^*}(u)=0$ and hence $	\Phi_{\lambda_0^*}(u)=\lim_{n\to \infty}\Phi_{\lambda_n}(u_n)$. Therefore $u_n\to u$ in $H_0^1(\Omega)$ and from Proposition \ref{distanceorigin} we obtain that $u\neq 0$. If $u_{\lambda_0^*}\equiv u$ the proof is complete.
\end{proof}
\begin{rem}\label{Rm1} Observe that $\lambda_0^*(u_{\lambda_0^*})=\lambda_0^*$ and hence $\lambda^*(u_{\lambda_0^*})=\lambda^*$.
\end{rem}
In order to show the existence of local minimizers when $\lambda>\lambda_0^*$ we need the following definition: for $\lambda\in(0,\lambda^*)$ define
\begin{equation}\label{minneh} 
\hat{\Phi}_\lambda=\inf\{\Phi_\lambda(u):\ u\in\mathcal{N}_\lambda^+\cup\mathcal{N}_\lambda^0\}.
\end{equation}
\begin{rem} From the definitions, Proposition \ref{fibering} and Proposition \ref{Neharimanifoldsandzeroenergy} we conclude that 
	\begin{equation*}
	\hat{\Phi}_\lambda=\inf_{u\in H_0^1(\Omega)}\Phi_\lambda(u), \forall \lambda\in(0,\lambda_0^*].
	\end{equation*} 
\end{rem}
\begin{prop}\label{nearzeroenergy} For each $\lambda\in (\lambda_0^*,\lambda^*)$ there holds 
	\begin{equation*}
	\hat{\Phi}_\lambda<\frac{(\gamma-2)^2}{4\gamma(4-\gamma)}\frac{a^2}{\lambda}.
	\end{equation*}
\end{prop}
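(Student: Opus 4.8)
The plan is to locate one explicit point of $\mathcal{N}_\lambda^+$ whose energy is strictly below $\frac{(\gamma-2)^2}{4\gamma(4-\gamma)}\frac{a^2}{\lambda}$, since $\hat\Phi_\lambda$ is an infimum over $\mathcal{N}_\lambda^+\cup\mathcal{N}_\lambda^0$. Fix $\lambda\in(\lambda_0^*,\lambda^*)$. By Proposition \ref{extremalparameter} there is $u\in H_0^1(\Omega)\setminus\{0\}$ with $\lambda(u)=\lambda^*$. Because $\lambda<\lambda^*=\lambda(u)$, Proposition \ref{fiberingvariation}(i) together with Proposition \ref{fibering} tells us $\psi_{\lambda,u}$ is of type I), so it has exactly two critical points $0<t_\lambda^-(u)<t_\lambda^+(u)$ with $t_\lambda^+(u)$ a local minimum and $\psi''_{\lambda,u}(t_\lambda^+(u))>0$; hence $t_\lambda^+(u)u\in\mathcal{N}_\lambda^+$ and already $\hat\Phi_\lambda\le\Phi_\lambda(t_\lambda^+(u)u)$.

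The key step is to compare $t_\lambda^+(u)$ with the scalar $t(u)$ from \eqref{tu}. I would use that $(t(u),\lambda(u))$ solves the system \eqref{extremal}; its first equation reads $a\|u\|^2-\|u\|_\gamma^\gamma t(u)^{\gamma-2}=-\lambda(u)\|u\|^4 t(u)^2$. Plugging this into $\psi'_{\lambda,u}(t)=t\bigl(a\|u\|^2+\lambda t^2\|u\|^4-t^{\gamma-2}\|u\|_\gamma^\gamma\bigr)$ at $t=t(u)$ gives $\psi'_{\lambda,u}(t(u))=(\lambda-\lambda(u))\|u\|^4 t(u)^3<0$, since $\lambda<\lambda(u)$. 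For a fiber map of type I) one has $\psi'_{\lambda,u}<0$ precisely on $(t_\lambda^-(u),t_\lambda^+(u))$, where moreover $\psi_{\lambda,u}$ is strictly decreasing; therefore $t(u)\in(t_\lambda^-(u),t_\lambda^+(u))$ and consequently $\Phi_\lambda(t_\lambda^+(u)u)=\psi_{\lambda,u}(t_\lambda^+(u))<\psi_{\lambda,u}(t(u))=\Phi_\lambda(t(u)u)$.

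Next I would push the parameter up to $\lambda^*$. For fixed $t>0$ the map $\mu\mapsto\Phi_\mu(t(u)u)$ has derivative $\tfrac14 t^4\|u\|^4>0$, hence is strictly increasing, so $\Phi_\lambda(t(u)u)<\Phi_{\lambda^*}(t(u)u)$. But $t(u)u\in\mathcal{N}_{\lambda^*}^0$: indeed $(t(u),\lambda(u))=(t(u),\lambda^*)$ solving \eqref{extremal} is exactly the statement $\psi'_{\lambda^*,u}(t(u))=\psi''_{\lambda^*,u}(t(u))=0$. Thus Proposition \ref{n0} applies and gives $\Phi_{\lambda^*}(t(u)u)=\frac{(\gamma-2)^2}{4\gamma(4-\gamma)}\frac{a^2}{\lambda^*}$. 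Chaining the inequalities and using $\lambda<\lambda^*$ (so $1/\lambda^*<1/\lambda$),
\[
\hat\Phi_\lambda\le\Phi_\lambda(t_\lambda^+(u)u)<\Phi_\lambda(t(u)u)<\Phi_{\lambda^*}(t(u)u)=\frac{(\gamma-2)^2}{4\gamma(4-\gamma)}\frac{a^2}{\lambda^*}<\frac{(\gamma-2)^2}{4\gamma(4-\gamma)}\frac{a^2}{\lambda},
\]
which is the claim.

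The main obstacle — really the only delicate point — is the sign computation placing $t(u)$ strictly between $t_\lambda^-(u)$ and $t_\lambda^+(u)$: it requires correctly extracting the identity $a\|u\|^2-\|u\|_\gamma^\gamma t(u)^{\gamma-2}=-\lambda(u)\|u\|^4t(u)^2$ from \eqref{extremal} and knowing that a type I) fiber map is strictly decreasing exactly on the interval between its two critical points. Both are routine given Proposition \ref{fibering} and the explicit formulas \eqref{tu}--\eqref{extrefunction}, but they are where care is needed; the remaining ingredients are just monotonicity of $\Phi_\mu(t(u)u)$ in $\mu$ and one application of Proposition \ref{n0}.
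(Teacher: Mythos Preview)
Your proof is correct and follows essentially the same route as the paper: pick a maximizer $u$ of $\lambda(\cdot)$ (the paper uses $u_{\lambda_0^*}$, which is such a maximizer by Remark~\ref{Rm1}), bound $\hat\Phi_\lambda$ by $\Phi_\lambda(t_\lambda^+(u)u)$, then push to $\Phi_{\lambda^*}(t(u)u)$ and apply Proposition~\ref{n0}. The only difference is in justifying $t_\lambda^-(u)<t(u)<t_\lambda^+(u)$: the paper appeals to the appendix Proposition~\ref{decre} (monotonicity of $t_\lambda^\pm$ in $\lambda$ and their common limit $t(u)$ as $\lambda\uparrow\lambda^*$), whereas you compute $\psi'_{\lambda,u}(t(u))=(\lambda-\lambda^*)\|u\|^4t(u)^3<0$ directly from the first equation of \eqref{extremal} and read off the location of $t(u)$ from the sign pattern of a type~I) fiber map. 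Your argument is slightly more self-contained, avoiding the implicit-differentiation lemma in the appendix; both yield the identical chain of strict inequalities.
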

\begin{proof} Indeed, first observe from Remark \ref{Rm1} that $t_\lambda^+(u_{\lambda_0^*})$ is defined for each $\lambda\in (\lambda_0^*,\lambda^*)$. From Proposition \ref{decre} in the Appendix we know that $t_\lambda^-(u_{\lambda_0^*})<t_{\lambda_0^*}(u_{\lambda_0^*})<t_\lambda^+(u_{\lambda_0^*})$ for each $\lambda\in (\lambda_0^*,\lambda^*)$ and therefore
	\begin{eqnarray}\label{ttt}
	\hat{\Phi}_\lambda&\le& \Phi_\lambda(t_\lambda^+(u_{\lambda_0^*})u_{\lambda_0^*}) \nonumber\\       
	&<& \Phi_\lambda(t_{\lambda^*}(u_{\lambda_0^*})u_{\lambda_0^*}) \nonumber\\
	&<& \Phi_{\lambda^*}(t_{\lambda^*}(u_{\lambda_0^*})u_{\lambda_0^*}) \nonumber\\
	&=& \frac{(\gamma-2)^2}{4\gamma(4-\gamma)}\frac{a^2}{\lambda^*},\ \forall\ \lambda\in(\lambda_0^*,\lambda^*),
	\end{eqnarray}
	where the equality comes from Proposition \ref{n0}. We combine \eqref{ttt} with $\lambda<\lambda^*$ to complete the proof.
\end{proof}
\begin{lem}\label{ekeland} For each $\lambda\in(\lambda_0^*,\lambda^*)$ there exists $u_\lambda\in \mathcal{N}_\lambda^+$ such that $\Phi_\lambda(u_\lambda)=\hat{\Phi}_\lambda$.
\end{lem}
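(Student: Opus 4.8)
The plan is to obtain $u_\lambda \in \mathcal{N}_\lambda^+$ attaining $\hat{\Phi}_\lambda$ via Ekeland's variational principle applied to $\Phi_\lambda$ restricted to (the closure of) $\mathcal{N}_\lambda^+ \cup \mathcal{N}_\lambda^0$, using the strict inequality from Proposition \ref{nearzeroenergy} to guarantee that the minimizing sequence does not escape toward $\mathcal{N}_\lambda^0$. First I would record the basic facts: by Proposition \ref{n0}, $\Phi_\lambda \equiv \frac{(\gamma-2)^2}{4\gamma(4-\gamma)}\frac{a^2}{\lambda}$ on $\mathcal{N}_\lambda^0$, while Proposition \ref{nearzeroenergy} gives $\hat{\Phi}_\lambda < \frac{(\gamma-2)^2}{4\gamma(4-\gamma)}\frac{a^2}{\lambda}$. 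Hence any minimizing sequence $(v_n)$ for $\hat{\Phi}_\lambda$ eventually satisfies $\Phi_\lambda(v_n) < \frac{(\gamma-2)^2}{4\gamma(4-\gamma)}\frac{a^2}{\lambda}$, so it stays a definite distance (in energy) below $\mathcal{N}_\lambda^0$; by coercivity (Lemma \ref{variatonal} i)) it is bounded, and by Proposition \ref{distanceorigin} it is bounded away from $0$.

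Next I would run Ekeland on the complete metric space $\overline{\mathcal{N}_\lambda^+ \cup \mathcal{N}_\lambda^0}$ with $\Phi_\lambda$: this yields a minimizing sequence $(u_n)$ with $\Phi_\lambda(u_n) \to \hat{\Phi}_\lambda$ and $\|(\Phi_\lambda|_{\mathcal{N}_\lambda^+})'(u_n)\| \to 0$. The key structural point is that, because $\hat{\Phi}_\lambda$ is strictly less than the common value of $\Phi_\lambda$ on $\mathcal{N}_\lambda^0$, for large $n$ we have $u_n \in \mathcal{N}_\lambda^+$ and moreover $u_n$ stays away from $\mathcal{N}_\lambda^0$; in particular $\psi_{\lambda,u_n}''(1)$ is bounded away from $0$, which is exactly what is needed to convert a sequence that is almost-critical for the constrained functional into a Palais–Smale sequence for $\Phi_\lambda$ itself (the Lagrange multiplier associated with the constraint $\psi'_{\lambda,u}(1)=0$ vanishes in the limit precisely because $\psi''_{\lambda,u_n}(1) \not\to 0$). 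So $(u_n)$ is a Palais–Smale sequence for $\Phi_\lambda$ at level $\hat{\Phi}_\lambda$.

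Then I would invoke Lemma \ref{variatonal} ii): the Palais–Smale condition holds for $\Phi_\lambda$ when $\lambda \in (0,\lambda^*)$, so $u_n \to u_\lambda$ strongly in $H_0^1(\Omega)$. Strong convergence gives $\Phi_\lambda(u_\lambda) = \hat{\Phi}_\lambda$, $\psi'_{\lambda,u_\lambda}(1)=0$ (so $u_\lambda \in \mathcal{N}_\lambda$), and $u_\lambda \ne 0$ by Proposition \ref{distanceorigin}. Finally I would check $u_\lambda \in \mathcal{N}_\lambda^+$ rather than $\mathcal{N}_\lambda^0$: if $u_\lambda \in \mathcal{N}_\lambda^0$ then $\Phi_\lambda(u_\lambda) = \frac{(\gamma-2)^2}{4\gamma(4-\gamma)}\frac{a^2}{\lambda}$, contradicting $\Phi_\lambda(u_\lambda)=\hat{\Phi}_\lambda$ together with Proposition \ref{nearzeroenergy}; and $u_\lambda \in \mathcal{N}_\lambda^-$ is impossible since $\mathcal{N}_\lambda^-$ is disjoint from $\overline{\mathcal{N}_\lambda^+ \cup \mathcal{N}_\lambda^0}$ in the relevant sense — more carefully, along the fiber through $u_\lambda$ the point $t=1$ must be the local minimum $t_\lambda^+$, since any nearby competitor in $\mathcal{N}_\lambda^+$ has energy $\ge \hat\Phi_\lambda = \Phi_\lambda(u_\lambda)$, forcing $\psi''_{\lambda,u_\lambda}(1)\ge 0$, and equality is excluded as above. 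Hence $u_\lambda \in \mathcal{N}_\lambda^+$ and $\Phi_\lambda(u_\lambda) = \hat{\Phi}_\lambda$.

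The main obstacle I anticipate is the careful handling of the constrained-critical-point-to-Palais–Smale passage on $\overline{\mathcal{N}_\lambda^+ \cup \mathcal{N}_\lambda^0}$: one must ensure the minimizing sequence produced by Ekeland genuinely lies in the open manifold $\mathcal{N}_\lambda^+$ (not on the boundary piece $\mathcal{N}_\lambda^0$) and that the constraint multiplier is controlled, which is where the strict inequality of Proposition \ref{nearzeroenergy} and the quantitative lower bound on $\psi''_{\lambda,u_n}(1)$ do the essential work; everything else is a routine application of Lemma \ref{variatonal}.
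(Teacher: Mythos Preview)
Your approach via Ekeland's variational principle is correct and would go through, but the paper takes a shorter and more direct route that avoids Ekeland entirely. The paper simply picks an arbitrary minimizing sequence $u_n\in\mathcal{N}_\lambda^+\cup\mathcal{N}_\lambda^0$, uses coercivity (Lemma \ref{variatonal}) to extract a weak limit $u$ (nonzero by Proposition \ref{distanceorigin} and the compact embedding into $L^\gamma$), and then argues for strong convergence using only the fiber-map geometry: if $u_n\not\to u$ strongly, weak lower semicontinuity of the norm gives
\[
\psi'_{\lambda,u}(1)=a\|u\|^2+\lambda\|u\|^4-\|u\|_\gamma^\gamma<\liminf_{n\to\infty}\bigl(a\|u_n\|^2+\lambda\|u_n\|^4-\|u_n\|_\gamma^\gamma\bigr)=0,
\]
which by Proposition \ref{fibering} forces $t_\lambda^-(u)<1<t_\lambda^+(u)$, and then
\[
\Phi_\lambda(t_\lambda^+(u)u)<\Phi_\lambda(u)\le\liminf_{n\to\infty}\Phi_\lambda(u_n)=\hat\Phi_\lambda,
\]
contradicting the definition of $\hat\Phi_\lambda$ since $t_\lambda^+(u)u\in\mathcal{N}_\lambda^+$. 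Strong convergence then places $u$ in $\mathcal{N}_\lambda^+\cup\mathcal{N}_\lambda^0$ with $\Phi_\lambda(u)=\hat\Phi_\lambda$, and $\mathcal{N}_\lambda^0$ is ruled out exactly as you do, via Propositions \ref{n0} and \ref{nearzeroenergy}.

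What each approach buys: your Ekeland argument produces, as a byproduct, a Palais--Smale sequence for the unconstrained functional and hence a critical point of $\Phi_\lambda$ directly, whereas the paper defers that conclusion to Lemma \ref{Neharimanifold}. On the other hand, the paper's argument is cleaner in that it sidesteps the technical work you flag as the main obstacle (applying Ekeland on a manifold with boundary, and showing the Lagrange multiplier vanishes by establishing a uniform lower bound on $\psi''_{\lambda,u_n}(1)$); the fiber projection $u\mapsto t_\lambda^+(u)u$ does all the work in one line.
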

\begin{proof} Indeed, suppose that $u_n\in \mathcal{N}_\lambda^+\cup \mathcal{N}_\lambda^0$ satisfies $\Phi_{\lambda}(u_n)\to \hat{\Phi}_\lambda$. From Lemma \ref{variatonal} we have that $u_n$ is bounded and therefore we can assume that $u_n \rightharpoonup u$ in $H_0^1(\Omega)$ and $u_n\to u$ in $L^\gamma(\Omega)$. From $a\|u_n\|^2+\lambda\|u_n\|^{4}-\|u_n\|_\gamma^\gamma=0$ for all $n$ and Proposition \ref{distanceorigin} we conclude that $u\neq 0$. We claim that $u_n\to u$ in $H_0^1(\Omega)$. In fact, suppose on the contrary that this is false. It follows that 
	\begin{equation*}
	\psi'_{\lambda,u}(1)=a\|u\|^2+\lambda\|u\|^4-\|u\|_\gamma^\gamma< \liminf_{n\to \infty}(a\|u_n\|^2+\lambda\|u_n\|^{4}-\|u_n\|_\gamma^\gamma)=0,
	\end{equation*}
	and hence we conclude that the fiber map $\psi_{\lambda,u}$ satisfies $I)$ of Proposition \ref{fibering} and $t_\lambda^-(u)<1<t_\lambda^+(u)$. It follows that
	\begin{equation*}
	\Phi_\lambda(t_\lambda^+(u)u)< \Phi_\lambda(u)\le \liminf_{n\to \infty}\Phi_\lambda(u_n)=\hat{\Phi}_\lambda,
	\end{equation*}
	which is a contradiction since $t_\lambda^+(u)u\in \mathcal{N}_\lambda^+$. We conclude that $u_n\to u$ in $H_0^1(\Omega)$ and hence $\Phi_\lambda(u)=\hat{\Phi}_\lambda$. From Propositions \ref{n0} and \ref{nearzeroenergy} we obtain that $u\in \mathcal{N}_\lambda^+$.	
\end{proof}
\begin{proof}[Proof of Proposition \ref{GMl0}] The Lemmas \ref{GMBl0} and \ref{GMB00} guarantee the existence of a global minimizer $u_\lambda$ for the functional $\Phi_\lambda$ satisfying: if $\lambda\in(0,\lambda_0^*)$ then $\Phi_\lambda(u_\lambda)<0$ while $\Phi_{\lambda_0^*}(u_{\lambda_0^*})=0$. For $\lambda\in(\lambda_0^*,\lambda^*)$ we use Lemma \ref{ekeland} in order to obtain a local minimizer for the functional $\Phi_\lambda$. It remains to show that $\Phi_\lambda(u_\lambda)>0$ for $\lambda\in(\lambda_0^*,\lambda^*)$, however, once $\hat{\Phi}_{\lambda_0^*}=0$ this is a consequence of Proposition \ref{contidecre}.
\end{proof}

\section{Mountain Pass Solution for $\Phi_\lambda$}$\label{S4}$
In this Section we show the exsitence of a mountain pass type solution to equation \eqref{p}. In order to formulate our result we need to introduce some notation.
For each $\lambda\in(0,\lambda^*)$ define
\begin{equation}\label{MPE}
c_\lambda=\inf_{\varphi\in \Gamma_\lambda}\max_{t\in [0,1]}\Phi_\lambda(\varphi(t)),
\end{equation}
where $\Gamma_\lambda=\{\varphi\in C([0,1],H_0^1(\Omega)): \varphi(0)=0,\ \varphi(1)=\bar{u}_\lambda\}$ with $\bar{u}_\lambda=u_{\lambda_0^*}$ if $\lambda\in (0,\lambda_0^*]$ and $\bar{u}_\lambda=u_\lambda$ for $\lambda\in (\lambda_0^*,\lambda^*)$.
\begin{prop}\label{MPS}  There exists $\varepsilon>0$ such that for each $\lambda\in(0,\lambda_0^*+\varepsilon)$ one can find $w_\lambda\in H_0^1(\Omega)$ satisfying $\Phi_\lambda(w_\lambda)=c_\lambda$ and $\Phi'_\lambda(w_\lambda)=0$. Moreover $c_\lambda>0$ and $c_\lambda>\hat{\Phi}_\lambda$.
\end{prop}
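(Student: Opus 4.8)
The plan is to verify the hypotheses of the Mountain Pass Theorem for $\Phi_\lambda$ using the geometry already assembled in Sections \ref{S2} and \ref{S3}, and then to push the endpoint $\lambda_0^*$ slightly to the right by a continuity/compactness argument. First I would establish the mountain pass geometry at the level $\lambda=\lambda_0^*$: by Lemma \ref{variatonal}(iii) there are constants $C_{\lambda}>0$ and $\rho_\lambda>0$ with $\Phi_\lambda(u)\ge C_\lambda$ whenever $\|u\|=\rho_\lambda$, giving the ``ring'' around the origin, while the endpoint $\bar u_\lambda$ of the admissible paths satisfies $\Phi_\lambda(\bar u_\lambda)=\hat\Phi_\lambda\le 0<C_\lambda$ for $\lambda\le\lambda_0^*$ (it equals $0$ exactly at $\lambda_0^*$, by Lemma \ref{GMB00}), and $\|\bar u_\lambda\|>\rho_\lambda$ provided $\rho_\lambda$ is taken small — here one uses that by Proposition \ref{distanceorigin} the Nehari points stay a definite distance from the origin, and that $\bar u_{\lambda}\in\mathcal N_\lambda^+\cup\mathcal N_\lambda^0$. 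Together with the Palais–Smale condition from Lemma \ref{variatonal}(ii), the Mountain Pass Theorem yields a critical point $w_\lambda$ of $\Phi_\lambda$ with $\Phi_\lambda(w_\lambda)=c_\lambda\ge C_\lambda>0$. The inequality $c_\lambda>\hat\Phi_\lambda$ then follows because $\hat\Phi_\lambda\le 0<C_\lambda\le c_\lambda$ for $\lambda\le\lambda_0^*$; for $\lambda$ slightly larger one argues instead that $\hat\Phi_\lambda$ stays small (by Proposition \ref{nearzeroenergy}, $\hat\Phi_\lambda<\frac{(\gamma-2)^2}{4\gamma(4-\gamma)}\frac{a^2}{\lambda}$ and by continuity it is close to $0$) while $c_\lambda\ge C_\lambda$ remains bounded below, so $c_\lambda>\hat\Phi_\lambda$ persists.

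The delicate point is the range of $\lambda$: for $\lambda\le\lambda_0^*$ the endpoint energy $\hat\Phi_\lambda$ is nonpositive and the geometry is immediate, but for $\lambda>\lambda_0^*$ one has $\hat\Phi_\lambda>0$ and a priori this could exceed $C_\lambda$, destroying the mountain pass separation. To handle this I would show that $\lambda\mapsto \hat\Phi_\lambda$ is continuous (or at least upper semicontinuous) at $\lambda_0^*$ with $\hat\Phi_{\lambda_0^*}=0$ — this is essentially the content of Proposition \ref{contidecre} invoked in the proof of Proposition \ref{GMl0} — while the constant $C_{\lambda}$ from Lemma \ref{variatonal}(iii) can be chosen bounded away from $0$ uniformly for $\lambda$ in a small neighbourhood of $\lambda_0^*$ (the Sobolev constant $C$ there does not depend on $\lambda$, so $C_\lambda$ and $\rho_\lambda$ vary continuously). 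Hence there is $\varepsilon>0$ such that $\hat\Phi_\lambda<C_\lambda$ for all $\lambda\in(0,\lambda_0^*+\varepsilon)$, and on this range the path $\varphi(t)=t\bar u_\lambda$ (suitably scaled past the ring) witnesses $\Gamma_\lambda\neq\emptyset$ and the separation $\sup_{\|u\|=\rho_\lambda}$-ring lies strictly below $c_\lambda$; moreover $\bar u_\lambda$ depends continuously on $\lambda$ through Lemma \ref{ekeland}, so the class $\Gamma_\lambda$ is nonempty and the construction is uniform.

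I would also need to check that along the straight path $t\mapsto t\bar u_\lambda$ the maximum of $\Phi_\lambda$ is finite (clear, since $\psi_{\lambda,\bar u_\lambda}(t)\to-\infty$ only if $\gamma>4$; here $\gamma\in(2,4)$ so in fact $\psi_{\lambda,\bar u_\lambda}$ is increasing after its local minimum — but that is fine, we only need the path to reach an endpoint with energy $\le\hat\Phi_\lambda$, and $\bar u_\lambda$ itself is that endpoint, so $\max_{t\in[0,1]}\Phi_\lambda(t\bar u_\lambda)$ is attained at an interior $t$ corresponding to the local max $t_\lambda^-(\bar u_\lambda)$ rescaled). The main obstacle, as indicated, is the uniform control near $\lambda_0^*$: one must simultaneously keep $\hat\Phi_\lambda$ small and $C_\lambda$ bounded below, which forces a careful use of the continuity statement in Proposition \ref{contidecre} together with the fact that the embedding constant in Lemma \ref{variatonal}(iii) is $\lambda$-independent. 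Everything else — boundedness of $\Gamma_\lambda$-paths, the Palais–Smale condition, positivity of $c_\lambda$ — is routine given the results already proved. Finally, the strict inequality $c_\lambda>\hat\Phi_\lambda$ is what guarantees $w_\lambda\neq u_\lambda$, i.e. that the mountain pass solution is genuinely a second solution; this is recorded for use in item 4) of Theorem \ref{T}.
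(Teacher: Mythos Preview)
Your proposal is correct and follows essentially the same route as the paper: verify the mountain pass geometry via the ring estimate of Lemma~\ref{variatonal}(iii), use Proposition~\ref{distanceorigin} to ensure $\|\bar u_\lambda\|>\rho_\lambda$, invoke the Palais--Smale condition, and extend past $\lambda_0^*$ by arguing that $\hat\Phi_\lambda$ stays below the ring height for $\lambda$ slightly larger than $\lambda_0^*$. The only cosmetic difference is that the paper isolates the last step as a separate Lemma~\ref{nearzeroenergyE} (showing $\hat\Phi_\lambda\le\delta$ for $\lambda\in(\lambda_0^*,\lambda_0^*+\varepsilon_\delta)$, via $t_\lambda^+(u_{\lambda_0^*})\to 1$) and packages the barrier as $M_\lambda=\min\{C_\lambda,\frac{(\gamma-2)^2}{4\gamma(4-\gamma)}\frac{a^2}{\lambda}\}$, whereas you appeal directly to the continuity statement of Proposition~\ref{contidecre}; both arguments deliver the same $\varepsilon$.
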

To prove Proposotion \ref{MPS} we need some auxiliary results.
\begin{lem}\label{nearzeroenergyE} Given $\delta>0$, there exists $\varepsilon_\delta>0$ such that 
	\begin{equation*}
	0< \hat{\Phi}_\lambda\le \delta,\ \forall\lambda\in(\lambda_0^*,\lambda_0^*+\varepsilon_\delta).
	\end{equation*} 
\end{lem}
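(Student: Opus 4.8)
I would prove Lemma~\ref{nearzeroenergyE} by combining the upper bound already available from Proposition~\ref{nearzeroenergy} with a continuity/monotonicity argument as $\lambda\downarrow\lambda_0^*$. The positivity $\hat{\Phi}_\lambda>0$ for $\lambda\in(\lambda_0^*,\lambda^*)$ is (essentially) contained in the proof of Proposition~\ref{GMl0}: by Proposition~\ref{Neharimanifoldsandzeroenergy}(ii), for $\lambda>\lambda_0^*$ one has $\inf_{t>0}\psi_{\lambda,u}(t)=0$ for every $u\neq 0$, so no point of $\mathcal{N}_\lambda^+\cup\mathcal{N}_\lambda^0$ can have strictly negative energy; since on $\mathcal{N}_\lambda^0$ the energy equals $\frac{(\gamma-2)^2}{4\gamma(4-\gamma)}\frac{a^2}{\lambda}>0$ by Proposition~\ref{n0}, and the infimum is attained in $\mathcal{N}_\lambda^+$ by Lemma~\ref{ekeland} at a nonzero critical point $u_\lambda$, one gets $\hat{\Phi}_\lambda=\Phi_\lambda(u_\lambda)>0$ (a critical point with zero energy would force $\psi'_{\lambda,u_\lambda}(1)=\psi_{\lambda,u_\lambda}(1)=0$, hence $u_\lambda\in\mathcal{N}_\lambda^0$, contradicting $u_\lambda\in\mathcal{N}_\lambda^+$). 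This handles the strict lower inequality $0<\hat{\Phi}_\lambda$.

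For the upper bound, the key point is that the test function $u_{\lambda_0^*}$ from Lemma~\ref{GMB00} is admissible for all $\lambda$ slightly above $\lambda_0^*$: by Remark~\ref{Rm1}, $\lambda^*(u_{\lambda_0^*})=\lambda^*$, so $\psi_{\lambda,u_{\lambda_0^*}}$ satisfies case I) of Proposition~\ref{fibering} for every $\lambda\in(\lambda_0^*,\lambda^*)$ and $t_\lambda^+(u_{\lambda_0^*})u_{\lambda_0^*}\in\mathcal{N}_\lambda^+$. Hence
\begin{equation*}
0<\hat{\Phi}_\lambda\le \Phi_\lambda\bigl(t_\lambda^+(u_{\lambda_0^*})u_{\lambda_0^*}\bigr),\qquad \lambda\in(\lambda_0^*,\lambda^*).
\end{equation*}
It therefore suffices to show that the right-hand side tends to $0$ as $\lambda\downarrow\lambda_0^*$. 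At $\lambda=\lambda_0^*$ the fiber map $\psi_{\lambda_0^*,u_{\lambda_0^*}}$ has its minimum value $0$ attained at $t_0(u_{\lambda_0^*})$ (this is how $\lambda_0^*$ is defined, cf. Proposition~\ref{fiberingvariation}(ii) and Remark~\ref{Rm1}), and $t_{\lambda_0^*}^+(u_{\lambda_0^*})=t_0(u_{\lambda_0^*})$. Since $\lambda\mapsto t_\lambda^+(u_{\lambda_0^*})$ is continuous (indeed $C^1$ by the Implicit Function Theorem applied to $\psi'_{\lambda,u}(t)=0$, using $\psi''>0$ at $t_\lambda^+$) and $(\lambda,t)\mapsto\Phi_\lambda(tu_{\lambda_0^*})$ is jointly continuous, we get $\Phi_\lambda(t_\lambda^+(u_{\lambda_0^*})u_{\lambda_0^*})\to\Phi_{\lambda_0^*}(t_0(u_{\lambda_0^*})u_{\lambda_0^*})=0$ as $\lambda\downarrow\lambda_0^*$. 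Given $\delta>0$, choosing $\varepsilon_\delta>0$ small enough (and $\le\lambda^*-\lambda_0^*$) so that this quantity is $\le\delta$ on $(\lambda_0^*,\lambda_0^*+\varepsilon_\delta)$ finishes the proof.

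**Main obstacle.** The only genuinely nontrivial point is the continuity of $\lambda\mapsto t_\lambda^+(u_{\lambda_0^*})$ up to the endpoint $\lambda_0^*$, together with the identification $t_{\lambda_0^*}^+(u_{\lambda_0^*})=t_0(u_{\lambda_0^*})$ — i.e. that the ``local minimum'' branch of critical points of the fiber map depends continuously on $\lambda$ and does not jump as $\lambda$ crosses $\lambda_0^*$. I expect this is exactly what Proposition~\ref{decre} and Proposition~\ref{contidecre} in the Appendix are designed to supply (they already appear in the proofs of Propositions~\ref{nearzeroenergy} and~\ref{GMl0}), so in the write-up I would simply invoke them: $t_\lambda^-(u_{\lambda_0^*})<t_{\lambda_0^*}(u_{\lambda_0^*})<t_\lambda^+(u_{\lambda_0^*})$ with both endpoints converging to $t_{\lambda_0^*}(u_{\lambda_0^*})=t_0(u_{\lambda_0^*})$ as $\lambda\downarrow\lambda_0^*$, and continuity of $\Phi$ then gives $\hat{\Phi}_\lambda\to 0$. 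Everything else is a direct citation of earlier results.
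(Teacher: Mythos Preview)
Your approach is essentially the same as the paper's: both use $u_{\lambda_0^*}$ as a test function, note that $t_\lambda^+(u_{\lambda_0^*})u_{\lambda_0^*}\in\mathcal{N}_\lambda^+$ for $\lambda$ slightly above $\lambda_0^*$, and use continuity of $t_\lambda^+(u_{\lambda_0^*})$ (the paper simply writes $t_\lambda^+(u_{\lambda_0^*})\to 1$) together with joint continuity of $\Phi_\lambda$ to conclude $\hat\Phi_\lambda\le\Phi_\lambda(t_\lambda^+(u_{\lambda_0^*})u_{\lambda_0^*})\to 0$. One small correction to your ``Main obstacle'' paragraph: as $\lambda\downarrow\lambda_0^*$ the two branches $t_\lambda^\pm(u_{\lambda_0^*})$ do \emph{not} collapse to a common value (that happens only as $\lambda\uparrow\lambda^*$, which is what Proposition~\ref{decre} actually states); rather, by the continuity part of Proposition~\ref{decre}, $t_\lambda^+(u_{\lambda_0^*})\to t_{\lambda_0^*}^+(u_{\lambda_0^*})=1$, which is all you need and is exactly what the paper uses.
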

\begin{proof} The inequality $\hat{\Phi}_\lambda> 0$ follows from Proposition \ref{GMl0}. Let $u_{\lambda_0^*}$ be given as in Proposition \ref{GMB00}. Observe that if $\lambda\downarrow \lambda_0^*$, then $\Phi_\lambda(u_{\lambda_0^*})\to \Phi_{\lambda_0^*}(u_{\lambda_0^*})=0$. Moreover, since from Remark \ref{Rm1} the fiber map $\psi_{\lambda_0^*,u_{\lambda_0^*}}$ satisfies $I)$ of Proposition \ref{fibering}, we have from Proposition \ref{fiberingvariation} that $\lambda_0^*<\lambda(u_{\lambda_0^*})$. It follows that there exists $\varepsilon_1>0$ such that $\lambda_0^*+\varepsilon_1<\lambda(u_{\lambda_0^*})$. From Propositions \ref{fibering} and \ref{fiberingvariation}, for each $\lambda\in (\lambda_0^*,\lambda_0^*+\varepsilon_1)$, there exists $t_\lambda^+(u_{\lambda_0^*})>0$  such that $t_\lambda^+(u_{\lambda_0^*})u_{\lambda_0^*}\in \mathcal{N}_\lambda^+$. Note that $t_\lambda^+(u_{\lambda_0^*})\to 1$ as $\lambda\downarrow \lambda_0^*$ and therefore
	\begin{equation*}
	\hat{\Phi}_\lambda\le \Phi_\lambda(t_\lambda^+(u_{\lambda_0^*})u_{\lambda_0^*})\to \Phi_{\lambda_0^*}(u_{\lambda_0^*})=0,\ \lambda\downarrow \lambda_0^*.
	\end{equation*}
	If $\varepsilon_{2,\delta}>0$ is choosen in such a way that $\Phi_\lambda(t_\lambda^+(u_{\lambda_0^*})u_{\lambda_0^*})<\delta$ for each $\lambda\in (\lambda_0^*,\lambda_0^*+\varepsilon_{2,\delta})$, then we set $\varepsilon_\delta=\min\{\varepsilon_1,\varepsilon_{2,\delta}\}$ and the proof is complete.
\end{proof}
\begin{definition}\label{DEFI} For $\lambda\in(0,\lambda^*)$ denote
	\begin{equation}\label{D1}
	M_\lambda=\min\left\{C_\lambda,\frac{(\gamma-2)^2}{4\gamma(4-\gamma)}\frac{a^2}{\lambda}\right\},
	\end{equation}
	where  $C_\lambda$ is given by Lemma \ref{variatonal} and $\frac{(\gamma-2)^2}{4\gamma(4-\gamma)}\frac{a^2}{\lambda}$ is given by Proposition \ref{n0}. We assume that $\rho_\lambda<r_\lambda$ where both numbers are given by Lemma \ref{variatonal} and Proposition \ref{distanceorigin} respectively. Choose $0<\delta<M_\lambda$ and from Proposition \ref{nearzeroenergy} we take the corresponding $\varepsilon_\delta$.
\end{definition}
Now we are in position to prove Proposition \ref{MPS}
\begin{proof}[Proof of Proposition \ref{MPS}] The proof will be done once we show that the functional $\Phi_\lambda$ has a mountain pass geometry (remember that $u_\lambda$ is a local minimizer for $\Phi_\lambda$), however, one can see from Definition \ref{DEFI} that
	\begin{equation}\label{MPGI}
	\inf_{\|u\|=\rho_\lambda}\Phi_\lambda(u)\ge M_\lambda>\max\{\Phi_\lambda(0),\Phi_\lambda(\bar{u}_\lambda)\},
	\end{equation}
	which is the desired mountain pass geometry. It follows that $c_\lambda\ge M_\lambda>\Phi_\lambda(\bar{u}_\lambda)$ and $\Phi_\lambda(\bar{u}_\lambda)\ge \hat{\Phi}_\lambda$ if $\lambda\in (0,\lambda_0^*]$ and $\Phi_\lambda(\bar{u}_\lambda)= \hat{\Phi}_\lambda$ otherwise.
	
	We infer that there exists a Palais-Smale sequence for the functional $\Phi_\lambda$ at the level $c_\lambda$, that is, there exists $w_n\in H_0^1(\Omega)$ such that $\Phi_\lambda(w_n)\to c_\lambda$ and $\Phi_\lambda(w_n)\to 0$. From Lemma \ref{variatonal} we have that $w_n\to w$ in $H_0^1(\Omega)$ and hence $\Phi_\lambda(w_\lambda)=c_\lambda$ and $\Phi'_\lambda(w_\lambda)=0$.
\end{proof}
\section{Proof of Theorem \ref{T}}$\label{S5}$
In this Section we prove our main result
\begin{proof}[Proof of the Theorem \ref{T}] The existence of $u_\lambda$ and $w_\lambda$ are given by Propositions \ref{GMl0} and \ref{MPS}. Observe that $u_\lambda$ being a global minimizer for $\Phi_\lambda$ when $\lambda\in(0,\lambda_0^*]$ it is obviously a critical point for $\Phi_\lambda$ and hence a solution to (\ref{p}). If $\lambda\in(\lambda_0^*,\lambda^*)$ we saw in Lemma \ref{ekeland} that $u_\lambda\in \mathcal{N}_\lambda^+$ and hence from Lemma \ref{Neharimanifold} it is a critical point for the functional $\Phi_\lambda$. The case $\lambda=\lambda^*$ goes as following. Choose a sequence $\lambda\uparrow \lambda^*$ and a corresponding sequence $u_n\equiv u_{\lambda_n}$ such that $\Phi_{\lambda_n}(w_n)
	=\hat{\Phi}_{\lambda_n}$ and $\Phi'_{\lambda_n}(u_n)=0$ for each $n\in \mathbb{N}$.	Observe from the proof of Proposition \ref{nearzeroenergy} that 
	\begin{equation*}
	\hat{\Phi}_{\lambda_n}<  \frac{(\gamma-2)^2}{4\gamma(4-\gamma)}\frac{a^2}{\lambda^*},\ \forall\ n\in\mathbb{N},
	\end{equation*}
	and therefore from Lemma \ref{variatonal} we conclude that $u_n\to u$ in $H_0^1(\Omega)$. From Proposition \ref{contidecre} we obtain that
	\begin{equation*}
	\Phi_{\lambda^*}(u)=\lim_{n\to \infty}\Phi_{\lambda_n}(w_n)=\lim_{n\to \infty}\hat{\Phi}_{\lambda_n}>0,
	\end{equation*}
	and hence $u\neq 0$. By passing the limit it follows that $\Phi'_{\lambda^*}(u)=0$. Moreover from the definition of $\lambda^*$ we also obtain that $\Phi''_{\lambda^*}(u)(u,u)$=0. If we set $u_{\lambda^*}\equiv u$ the proof of Theorem \ref{T} items $1)$, $2)$ and $3)$ is complete.
	
	The item $4)$ is a consequence of Proposition \ref{MPS}. Item $5)$ is proved by using the fact that every critical point of $\Phi_\lambda$ lies in $\mathcal{N}_\lambda$ and Proposition \ref{Neharimanifoldsandzeroenergy}. To conclude we observe that standard arguments using the fact that $\Phi_\lambda(u)=\Phi_\lambda(|u|)$ provide positive solutions.
\end{proof}
\section{Asymptotic Behavior of $u_\lambda$ and $w_\lambda$ as $\lambda\downarrow 0$}$\label{S6}$
Define $\Phi_0:H_0^1(\Omega)\to \mathbb{R}$ by 
\begin{equation*}\label{lambda=0}
\Phi_0(u)=\frac{a}{2}\|u\|^2-\frac{1}{\gamma}\|u\|_\gamma^\gamma,
\end{equation*}
and observe that $\Phi_0(u_{\lambda_0^*})<\Phi_{\lambda_0^*}(u_{\lambda_0^*})=0$, where $u_{\lambda_0^*}$ is given by Theorem \ref{T}.  Define 
\begin{equation*}\label{MP00}
c_0=\inf_{\varphi\in \Gamma}\max_{t\in [0,1]}\Phi_0(\varphi(t)),
\end{equation*}
where $\Gamma=\{\varphi\in C([0,1]:H_0^1(\Omega)):\varphi(0)=0,\ \varphi(1)=u_{\lambda_0^*}  \}$.  Standard arguments provide a function $w_0\in H_0^1(\Omega)$ such that $\Phi_0(w_0)=c_0>0$ and $\Phi'_0(w_0)=0$. For $\lambda\in (0,\lambda^*_0)$, let us assume that $u_\lambda,w_\lambda$ are given by Theorem \ref{T}. In this section we prove the following
\begin{prop}\label{asym} There holds
	\begin{enumerate}
		\item[i)] $\Phi_\lambda(u_\lambda)\to -\infty$ and $\|u_\lambda\|\to\infty$ as $\lambda\downarrow 0$.
		\item[ii)] $w_\lambda \to w_0$ in $H_0^1(\Omega)$ where $w_0\in H_0^1(\Omega)$ satisfies $\Phi_0(w_0)=c_0$ and $\Phi'_0(w_0)=0$.
	\end{enumerate}
\end{prop}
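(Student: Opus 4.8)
The plan is to treat the two items separately, using the fiber-map description from Proposition \ref{fibering} together with the variational characterizations already established.

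For item $i)$, the starting observation is that $u_\lambda$ is a global minimizer of $\Phi_\lambda$ for $\lambda\in(0,\lambda_0^*)$, so $\Phi_\lambda(u_\lambda)\le \Phi_\lambda(tv)$ for any fixed $v\in H_0^1(\Omega)\setminus\{0\}$ and any $t>0$. First I would fix such a $v$ (say with $\|v\|=1$) and compute $\Phi_\lambda(tv)=\frac{a}{2}t^2+\frac{\lambda}{4}t^4-\frac{t^\gamma}{\gamma}\|v\|_\gamma^\gamma$; since $\gamma>2$, for $\lambda$ small the quartic term is negligible and one can choose $t=t(\lambda)\to\infty$ (for instance $t=\lambda^{-\alpha}$ with a suitable small $\alpha>0$) making this quantity tend to $-\infty$. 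This forces $\Phi_\lambda(u_\lambda)\to-\infty$. For the norm blow-up, I would argue by contradiction: if $\|u_{\lambda_n}\|$ stayed bounded along some sequence $\lambda_n\downarrow 0$, then $\Phi_{\lambda_n}(u_{\lambda_n})=\frac{a}{2}\|u_{\lambda_n}\|^2+\frac{\lambda_n}{4}\|u_{\lambda_n}\|^4-\frac{1}{\gamma}\|u_{\lambda_n}\|_\gamma^\gamma$ would also be bounded below (the negative term is controlled by the Sobolev embedding and the bounded norm), contradicting $\Phi_\lambda(u_\lambda)\to-\infty$. So $\|u_\lambda\|\to\infty$.

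For item $ii)$, the plan is a $\Gamma$-convergence / direct comparison argument between the mountain-pass levels $c_\lambda$ and $c_0$. First I would show $\limsup_{\lambda\downarrow 0}c_\lambda\le c_0$: fix a near-optimal path $\varphi\in\Gamma$ for $\Phi_0$; since $\Phi_\lambda(u)=\Phi_0(u)+\frac{\lambda}{4}\|u\|^4$ and $\varphi([0,1])$ is compact hence norm-bounded, $\max_{t}\Phi_\lambda(\varphi(t))\le \max_t\Phi_0(\varphi(t))+C\lambda$; but one must also fix the endpoint, since $\Gamma_\lambda$ requires $\varphi(1)=u_{\lambda_0^*}$, which matches $\Gamma$, so no adjustment is needed (both use the same endpoint $u_{\lambda_0^*}$). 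Then I would show the family $\{w_\lambda\}_{\lambda\in(0,\lambda_0^*/2]}$ is bounded in $H_0^1(\Omega)$: from $\Phi_\lambda'(w_\lambda)w_\lambda=0$ one gets $a\|w_\lambda\|^2+\lambda\|w_\lambda\|^4=\|w_\lambda\|_\gamma^\gamma$, hence $\Phi_\lambda(w_\lambda)=(\frac12-\frac1\gamma)a\|w_\lambda\|^2+(\frac14-\frac1\gamma)\lambda\|w_\lambda\|^4$; since $\gamma<4$ the last coefficient is negative, but combining $c_\lambda=\Phi_\lambda(w_\lambda)$ bounded above (by the $\limsup$ estimate) with the Nehari identity and Sobolev inequality still yields a uniform bound on $\|w_\lambda\|$ — this is the step requiring a little care because of the sign of the quartic coefficient, and I expect it to be the main technical obstacle. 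Granting boundedness, up to a subsequence $w_\lambda\rightharpoonup w_0$ in $H_0^1(\Omega)$ and $w_\lambda\to w_0$ in $L^\gamma(\Omega)$; passing to the limit in $\Phi_\lambda'(w_\lambda)=0$ (the term $\lambda\|w_\lambda\|^2\Delta w_\lambda\to 0$) gives $\Phi_0'(w_0)=0$, and since $w_0$ is a weak limit of a critical sequence with $\Phi_0(w_0)=\lim\Phi_\lambda(w_\lambda)$ forced by the convergence, one checks $w_0\neq 0$ using $\|w_\lambda\|\ge r>0$ (a $\lambda$-uniform lower bound from the Nehari identity and Sobolev embedding) so $\|w_0\|_\gamma\neq 0$.

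Finally, to upgrade weak to strong convergence and identify the level, I would use that $w_\lambda\to w_0$ in $L^\gamma$, so $\|w_\lambda\|_\gamma^\gamma\to\|w_0\|_\gamma^\gamma$; from the Nehari identities $a\|w_\lambda\|^2+\lambda\|w_\lambda\|^4=\|w_\lambda\|_\gamma^\gamma$ and $a\|w_0\|^2=\|w_0\|_\gamma^\gamma$ together with $\lambda\|w_\lambda\|^4\to 0$ (using boundedness) we get $\|w_\lambda\|^2\to\|w_0\|^2$, hence $w_\lambda\to w_0$ strongly in $H_0^1(\Omega)$. Then $c_\lambda=\Phi_\lambda(w_\lambda)\to\Phi_0(w_0)$, and combined with $\limsup c_\lambda\le c_0$ and the fact that $\Phi_0(w_0)\ge c_0$ because $w_0$ is a nontrivial critical point of $\Phi_0$ lying on its Nehari manifold (where $c_0$ is the mountain-pass = least-energy level, by the standard characterization for the $\gamma$-superlinear problem), we conclude $\Phi_0(w_0)=c_0$. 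Since the limit is independent of the subsequence, the full family converges, completing the proof.
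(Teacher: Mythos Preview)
Your argument for item $i)$ is essentially the paper's: fix a direction, push $t$ to infinity while $\lambda\downarrow 0$, and use that $u_\lambda$ is a global minimizer. The norm blow-up by contradiction is also the same.

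For item $ii)$ there is a genuine gap at exactly the step you flag. From the Nehari identity and the energy you correctly obtain
\[
c_\lambda=\Bigl(\tfrac12-\tfrac1\gamma\Bigr)a\,\|w_\lambda\|^2-\Bigl(\tfrac1\gamma-\tfrac14\Bigr)\lambda\,\|w_\lambda\|^4
=:A\,\|w_\lambda\|^2-B\,\lambda\,\|w_\lambda\|^4,
\]
with $A,B>0$. Boundedness of $c_\lambda$ together with Sobolev does \emph{not} force $\|w_\lambda\|$ to stay bounded: the scenario $\|w_\lambda\|\to\infty$ with $\lambda\|w_\lambda\|^2\to A/B$ is perfectly consistent with $c_\lambda$ bounded, and the Sobolev inequality applied to $a\|w_\lambda\|^2+\lambda\|w_\lambda\|^4=\|w_\lambda\|_\gamma^\gamma\le C\|w_\lambda\|^\gamma$ only gives \emph{lower} bounds on $\|w_\lambda\|$. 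So the three ingredients you list are insufficient. The paper circumvents this by reversing the order of the argument: it first proves $c_\lambda\to c_0$ (monotonicity in $\lambda$ plus an upper-bound path), then uses near-optimal mountain-pass paths $v_n$ close to $w_n$ to show $\lambda_n\|w_n\|^4\to 0$ \emph{before} knowing boundedness, next deduces $\lambda_n\|w_n\|^2\to 0$ from the two scalar equations $\Phi_{\lambda_n}(w_n)=c_{\lambda_n}$ and $\Phi_{\lambda_n}'(w_n)w_n=0$, and only then obtains boundedness by contradiction (dividing by $\|w_n\|^2$). If you want to salvage your direct route, one extra piece of information does the job: once you know $w_\lambda\in\mathcal N_\lambda^-$ (local maximum of the fiber map), the second-order condition gives $\lambda\|w_\lambda\|^2\le \tfrac{(\gamma-2)a}{4-\gamma}=\tfrac{A}{2B}$, whence $c_\lambda\ge \tfrac{A}{2}\|w_\lambda\|^2$ and boundedness follows immediately from $c_\lambda$ bounded above. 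But this requires an argument that the mountain-pass point lies on $\mathcal N_\lambda^-$, which neither you nor the paper supplies.
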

\begin{proof} $i)$ Indeed, choose any $u\in H_0^1(\Omega)$ and suppose without loss of generality that $\lambda\in(0,\lambda(u))$. It follows from Proposition \ref{fibering} that $\psi_{\lambda,u}(t)\ge \psi_{\lambda,u}(t_\lambda^+(u))\ge \inf_{u\in H_0^1(\Omega)}\Phi_\lambda(u)=\hat{\Phi}_\lambda$. Now observe that for fixed $t>0$ there holds 
	\begin{equation}\label{AA1}
	\psi_{\lambda,u}(t)\to \frac{a}{2}\|u\|^2 t^2-\frac{1}{\gamma}\|u\|_\gamma^\gamma t^\gamma,\ \mbox{as}\ \lambda\downarrow 0.
	\end{equation} 
	Once 
	\begin{equation*}
	\lim_{t\to \infty}\left(\frac{a}{2}\|u\|^2 t^2-\frac{1}{\gamma}\|u\|_\gamma^\gamma t^\gamma\right)=-\infty,
	\end{equation*}
	it follows from \eqref{AA1} that given $M<0$ there exists $t>0$ and $\delta>0$ such that if $\lambda\in(0,\delta)$, then $\psi_{\lambda,u}(t)<M$ and hence $\hat{\Phi}_\lambda<M$, which proves that $\Phi_\lambda(u_\lambda)\to -\infty$ as $\lambda\downarrow 0$. One can easily infer from the last convergence that $\|u_\lambda\|\to\infty$ as $\lambda\downarrow 0$.
\end{proof}
To prove the item $ii)$ of Proposition \ref{asym} we need to establish some results. 
\begin{lem}\label{las1}
	The function $[0,\lambda_0^*)\ni\lambda\mapsto c_\lambda=\Phi_\lambda(w_\lambda)$ is non-decreasing. Moreover $c_\lambda\to c_0$ as $\lambda\downarrow 0$.
\end{lem}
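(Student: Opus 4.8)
The plan is to split the statement into two parts: monotonicity of $\lambda \mapsto c_\lambda$ on $[0,\lambda_0^*)$, and the limit $c_\lambda \to c_0$ as $\lambda\downarrow 0$. For the monotonicity, the key observation is that for a fixed $u \in H_0^1(\Omega)$ the map $\lambda \mapsto \Phi_\lambda(u)$ is non-decreasing (indeed strictly increasing when $u \neq 0$), since $\Phi_\lambda(u) = \frac{a}{2}\|u\|^2 + \frac{\lambda}{4}\|u\|^4 - \frac{1}{\gamma}\|u\|_\gamma^\gamma$ depends on $\lambda$ only through the nonnegative term $\frac{\lambda}{4}\|u\|^4$. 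The subtlety is that the endpoint of the admissible paths changes with $\lambda$: for $\lambda \in (0,\lambda_0^*]$ we have $\bar u_\lambda = u_{\lambda_0^*}$ fixed, so $\Gamma_\lambda = \Gamma$ is the \emph{same} path space for all such $\lambda$, and then for $\mu < \lambda$ in $(0,\lambda_0^*)$ and any $\varphi \in \Gamma$ we get $\max_{t}\Phi_\mu(\varphi(t)) \le \max_t \Phi_\lambda(\varphi(t))$, and taking the infimum over $\varphi \in \Gamma$ yields $c_\mu \le c_\lambda$. So on the interval $(0,\lambda_0^*)$ the common endpoint makes monotonicity almost immediate; I would state this carefully, noting that $c_0$ is defined with exactly this same path space $\Gamma$ (endpoint $u_{\lambda_0^*}$), so $c_0 \le c_\lambda$ for every $\lambda \in (0,\lambda_0^*)$ as well, extending monotonicity to $[0,\lambda_0^*)$.

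For the limit $c_\lambda \to c_0$ as $\lambda\downarrow 0$: monotonicity already gives $c_0 \le \liminf_{\lambda\downarrow 0} c_\lambda$, so it suffices to show $\limsup_{\lambda\downarrow 0} c_\lambda \le c_0$. Fix $\varepsilon > 0$ and choose a path $\varphi \in \Gamma$ with $\max_{t\in[0,1]}\Phi_0(\varphi(t)) \le c_0 + \varepsilon$. Since $\varphi([0,1])$ is a compact subset of $H_0^1(\Omega)$, the quantity $\sup_{t\in[0,1]}\|\varphi(t)\|^4 =: R < \infty$ is finite, hence
\begin{equation*}
\max_{t\in[0,1]}\Phi_\lambda(\varphi(t)) \le \max_{t\in[0,1]}\Phi_0(\varphi(t)) + \frac{\lambda}{4}R \le c_0 + \varepsilon + \frac{\lambda}{4}R.
\end{equation*}
Therefore $c_\lambda \le c_0 + \varepsilon + \frac{\lambda}{4}R$ for all $\lambda \in (0,\lambda_0^*)$, and letting $\lambda\downarrow 0$ gives $\limsup_{\lambda\downarrow 0} c_\lambda \le c_0 + \varepsilon$. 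Since $\varepsilon$ was arbitrary, $\limsup_{\lambda\downarrow 0} c_\lambda \le c_0$, and combined with the reverse inequality we conclude $c_\lambda \to c_0$.

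The only point requiring a little care — and the one I would flag as the main (mild) obstacle — is the bookkeeping around the definition of $\Gamma_\lambda$ and whether the mountain pass value $c_\lambda$ is genuinely attained at the critical point $w_\lambda$, i.e. the identity $c_\lambda = \Phi_\lambda(w_\lambda)$ used in the statement; this is exactly the content of Proposition \ref{MPS}, which guarantees (for $\lambda$ in a neighbourhood of $0$, in particular on all of $(0,\lambda_0^*)$ after possibly shrinking) a critical point $w_\lambda$ with $\Phi_\lambda(w_\lambda) = c_\lambda$, so I would simply invoke it. Everything else is the elementary monotonicity of $\Phi_\lambda(u)$ in $\lambda$ together with compactness of a near-optimal path, and no $\mathrm{PS}$-type compactness argument is needed for this particular lemma.
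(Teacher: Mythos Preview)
Your proposal is correct and follows essentially the same approach as the paper: both use that $\Gamma_\lambda=\Gamma$ for all $\lambda\in[0,\lambda_0^*]$, deduce monotonicity from $\Phi_\lambda(u)\le\Phi_{\lambda'}(u)$ for $\lambda\le\lambda'$, and obtain the limit by testing $c_\lambda$ against a near-optimal path for $c_0$ and using compactness of its image. Your version is in fact a bit cleaner, replacing the paper's contradiction argument with a direct quantitative bound $c_\lambda\le c_0+\varepsilon+\tfrac{\lambda}{4}R$.
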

\begin{proof} First observe that $\Gamma_\lambda=\Gamma$ for each $\lambda\in (0,\lambda_0^*]$. Suppose that $0\le\lambda<\lambda'<\lambda_0^*$ and fix any $\varphi\in \Gamma$. It follows that $\max_{t\in [0,1]}\Phi_\lambda(\varphi(t))<\max_{t\in [0,1]}\Phi_{\lambda'}(\varphi(t))$ and by taking the infimum in both sides we conclude that $c_\lambda\le c_{\lambda'}$.
	
	Once $c_\lambda$ is non-decreasing, we can assume that $c_\lambda\to c\ge c_0$ as $\lambda \downarrow 0$. Suppose on the contrary that $c>c_0$. Given $\delta>0$ such that $c_0+\varepsilon<c$ choose $\varphi\in \Gamma$ such that $c_0\le \max_{t\in [0,1]}\Phi_0(\varphi(t))<c_0+\delta$. If $\lambda$ is sufficiently close to $0$, then  $c_0\le \max_{t\in [0,1]}\Phi_0(\varphi(t))< \max_{t\in [0,1]}\Phi_\lambda(\varphi(t))<c_0+\delta$ and consequently $c_0\le c_\lambda<c_0+\delta<c<c_\lambda$ which is clearly a contradiction and therefore $c_\lambda\to c_0$ as $\lambda\downarrow 0$.
\end{proof}
Now we may finish the proof of Proposition \ref{asym}:
\begin{proof}[Proof of $ii)$ of Proposition \ref{asym}] Indeed, suppose that $\lambda_n\downarrow 0$ and for each $n\in \mathbb{N}$ choose $w_n\equiv w_{\lambda_n}$ such that $\Phi_{\lambda_n}(w_n)=c_{\lambda_n}$ and $\Phi'_{\lambda_n}(w_n)=0$. We claim that $\lambda_n\|w_n\|^4\to 0$ as $n\to \infty$. In fact, for each $n$ we can find a path $\varphi_n\in \Gamma_{\lambda_n}=\Gamma$ and a function $v_n$ such that $\Phi_{\lambda_n}(v_n)=\max_{t\in[0,1]}\Phi_{\lambda_n}(\varphi(t))$ and
	\begin{equation}\label{tt1}
	0<\Phi_{\lambda_n}(v_n)-c_{\lambda_n}\to 0,\ \|v_n-w_n\|\to 0,\ \|v_n-w_n\|_\gamma\to 0,\ \mbox{as}\ n\to \infty.
	\end{equation} 
	Now observe from the definition of $c_0$, Lemma \ref{las1} and (\ref{tt1}) that 
	\begin{equation}\label{tt2}
	0<\lim_{n\to \infty}\Phi_0(v_n)-c_0\le \lim_{n\to \infty}\Phi_{\lambda_n}(v_n)-c_0=\lim_{n\to \infty}(\Phi_{\lambda_n}(v_n)-c_{\lambda_n})=0.
	\end{equation}
	It follows from \eqref{tt1} and \eqref{tt2} that
	\begin{equation*}
	\frac{a}{2}\|v_n\|^2-\frac{1}{p}\|v_n\|_\gamma^\gamma\to 0\ \mbox{and}\ \frac{a}{2}\|v_n\|^2+\frac{\lambda_n}{4}\|v_n\|^4-\frac{1}{p}\|v_n\|_\gamma^\gamma\to 0,\ \mbox{as}\ n\to \infty,
	\end{equation*}
	which implies that $\lambda_n\|v_n\|^4\to 0$ as $n\to \infty$. From \eqref{tt1} we conclude that
	\begin{equation*}
	|\lambda_n\|w_n\|^4-\lambda_n\|v_n\|^4|\to 0,\ \mbox{as}\ n\to \infty,
	\end{equation*}
	and hence $\lambda_n\|w_n\|^4\to 0$ as $n\to\infty$ as we desired. Now note from the equations $\Phi_{\lambda_n}(w_n)=c_{\lambda_n}$ and $\Phi'_{\lambda_n}(w_n)=0$, $n\in\mathbb{N}$ that 
	\begin{equation}\label{pt1}
	\left\{
	\begin{aligned}
	\frac{a}{2}\|w_n\|^2+\frac{\lambda_n}{4} \|w_n\|^4-\frac{1}{\gamma}\|w_n\|_\gamma^\gamma  &= c_{\lambda_n}, \\
	a\|w_n\|^2+\lambda_n \|w_n\|^4-\|w_n\|_\gamma^\gamma  &= 0, 
	\end{aligned}
	\right.
	\end{equation}
	which combined with the limit $\lambda_n\|w_n\|^4\to 0$ as $n\to\infty$ and the Lemma \ref{las1} implies that 
	\begin{equation*}
	\left\{
	\begin{aligned}
	\frac{a}{2}\lambda_n\|w_n\|^2-\frac{\lambda_n}{\gamma}\|w_n\|_\gamma^\gamma  &= o(1), \\
	a\lambda_n\|w_n\|^2-\lambda_n\|w_n\|_\gamma^\gamma  &= 0.
	\end{aligned}
	\right.
	\end{equation*}
	We multiply the first equation by $-\gamma$ and sum with the second equation to obtain that 
	\begin{equation*}
	\left(-\frac{\gamma}{2}+1\right)a\lambda_n\|w_n\|^2=o(1),
	\end{equation*}
	which implies that $\lambda_n\|w_n\|^2\to 0$ as $n\to \infty$. Now we claim that $\|w_n\|$ is bounded. In fact, suppose on the contrary that up to a subsequence $\|w_n\|\to \infty$ as $n\to \infty$. From \eqref{pt1} we obtain that 
	\begin{equation*}
	\left\{
	\begin{aligned}
	\frac{a}{2}+\frac{\lambda_n}{4} \|w_n\|^2-\frac{1}{\gamma}\frac{\|w_n\|_\gamma^\gamma}{\|w_n\|^2}  &= o(1), \\
	a+\lambda_n \|w_n\|^2-\frac{\|w_n\|_\gamma^\gamma}{\|w_n\|^2}  &= 0.
	\end{aligned}
	\right.
	\end{equation*}
	Once $\lambda_n\|w_n\|^2\to 0$ as $n\to \infty$ we conclude that $\gamma=2$ which is a contradiction. Since $\|w_n\|$ is bounded we obtain that $\Phi_0(w_n)\to c_0$ and $\Phi'_0(w_n)\to 0$ as $n\to \infty$ and hence $w_n\to w_0$ as $n\to \infty$, where $w_0$ satisfies $\Phi_0(w_0)=c_0$ and $\Phi'_0(w_0)=0$.
\end{proof}
\begin{proof}[Proof of the Theorem \ref{T2}] It is a consequence of Proposition \ref{asym}.
\end{proof}
\section{Some Conclusions and Remarks}$\label{S7}$
If we plot the energy of the two solutions as a function of $\lambda$ we obtain the following picture:

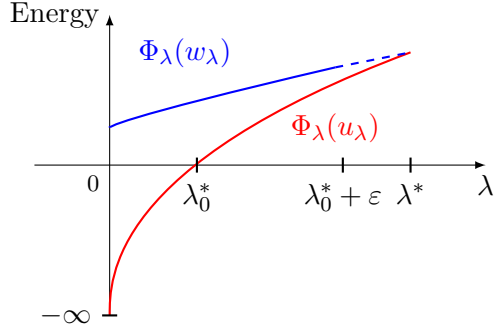
\begin{figure}[h]
	\centering
	\begin{tikzpicture}[>=latex]
	\draw[->] (-1,0) -- (5,0) node[below] {$\lambda$};
	\foreach \x in {}
	\draw[shift={(\x,0)}] (0pt,2pt) -- (0pt,-2pt) node[below] {\footnotesize $\x$};
	\draw[->] (0,-2) -- (0,2) node[left] {$\mbox{Energy}$};
	\foreach \y in {}
	\draw[shift={(0,\y)}] (2pt,0pt) -- (-2pt,0pt) node[left] {\footnotesize $\y$};
	\node[below left] at (0,0) {\footnotesize $0$};
	\draw[red,thick] (0,-2) .. controls (0,-1.5) and (0,0) .. (4,1.5);
	\draw[blue,thick] (0,.5) .. controls (0,0.5) and (0,0.6) .. (3,1.3);
	\draw[blue,thick,dashed] (3,1.3) .. controls (3,1.3) and (4,1.5) .. (4,1.5);
	\draw [thick] (1.16,-.1) node[below]{$\lambda_0^*$} -- (1.16,0.1); 
	\draw [thick] (3.1,-.1) node[below]{$\lambda_0^*+\varepsilon$} -- (3.1,0.1); 
	\draw [thick] (4,-.1) node[below]{$\lambda^*$} -- (4,0.1); 
	\draw [thick] (-.1,-2) node[left]{$-\infty$} -- (.1,-2); 
	\node[] at (1,1.5) { {\color{blue}$\Phi_\lambda(w_\lambda)$}};
	\node[] at (3,0.5) { {\color{red}$\Phi_\lambda(u_\lambda)$}};
	\end{tikzpicture}
	\caption{Energy depending on $\lambda$} \label{fig:M1}
	
\end{figure}
Observe from Proposition \ref{contidecre} that the energy of the local minimum depending on $\lambda$ is continuous and increasing (red plot) and although we could not prove it, we believe that the same holds true  for the energy of the mountain pass solution (blue plot). We also believe that $\lambda^*$ is a bifurcation turning point, that is, the two types of solutions must coincide at $\lambda^*$ as Figure 1 suggests.

\appendix
\section{}

\begin{prop}\label{contidecre} The function $(0,\lambda^*)\ni \lambda\mapsto\hat{\Phi}_\lambda$ is continuous and increasing.
\end{prop}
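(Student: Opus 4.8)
\noindent\emph{Plan of proof.} The first step is to record that the infimum $\hat\Phi_\lambda$ is always attained at a point of $\mathcal{N}_\lambda^+$. For every $\lambda\in(0,\lambda^*)$, Proposition~\ref{GMl0} (through Lemmas~\ref{GMBl0}, \ref{GMB00} and \ref{ekeland}) yields a minimizer $u_\lambda$ of $\Phi_\lambda$ over $\mathcal{N}_\lambda^+\cup\mathcal{N}_\lambda^0$. If $u_\lambda$ belonged to $\mathcal{N}_\lambda^0$, then Proposition~\ref{n0} would force $\hat\Phi_\lambda=\frac{(\gamma-2)^2}{4\gamma(4-\gamma)}\frac{a^2}{\lambda}>0$, contradicting $\hat\Phi_\lambda\le 0$ when $\lambda\le\lambda_0^*$ and $\hat\Phi_\lambda<\frac{(\gamma-2)^2}{4\gamma(4-\gamma)}\frac{a^2}{\lambda}$ when $\lambda\in(\lambda_0^*,\lambda^*)$ (Proposition~\ref{nearzeroenergy}). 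Hence $u_\lambda\in\mathcal{N}_\lambda^+$; writing $v_\lambda=u_\lambda/\|u_\lambda\|$ we obtain $\lambda<\lambda(v_\lambda)$ \emph{strictly}, the fiber $\psi_{\lambda,v_\lambda}$ is of type I) in Proposition~\ref{fibering}, and $\hat\Phi_\lambda=\psi_{\lambda,v_\lambda}(t_\lambda^+(v_\lambda))$.

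The second step is an envelope computation. For a fixed $v$ with $\|v\|=1$ and $\lambda\in(0,\lambda(v))$ one has $\psi_{\lambda,v}'(t_\lambda^+(v))=0$ and $\psi_{\lambda,v}''(t_\lambda^+(v))>0$, so the Implicit Function Theorem makes $\lambda\mapsto t_\lambda^+(v)$ of class $C^1$ and
\begin{equation*}
\frac{d}{d\lambda}\,\psi_{\lambda,v}\bigl(t_\lambda^+(v)\bigr)=\partial_\lambda\Phi_\lambda\bigl(t_\lambda^+(v)v\bigr)=\tfrac14\|v\|^4\,\bigl(t_\lambda^+(v)\bigr)^4>0 .
\end{equation*}
Monotonicity of $\hat\Phi$ is now immediate: for $0<\lambda<\lambda'<\lambda^*$ we have $\lambda<\lambda'<\lambda(v_{\lambda'})$, hence
\begin{equation*}
\hat\Phi_\lambda\le\psi_{\lambda,v_{\lambda'}}\bigl(t_\lambda^+(v_{\lambda'})\bigr)<\psi_{\lambda',v_{\lambda'}}\bigl(t_{\lambda'}^+(v_{\lambda'})\bigr)=\hat\Phi_{\lambda'} ,
\end{equation*}
so $\hat\Phi$ is strictly increasing and its one-sided limits exist everywhere. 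For right continuity at $\lambda_0$, take $\lambda_n\downarrow\lambda_0$; by monotonicity $\hat\Phi_{\lambda_n}\downarrow L\ge\hat\Phi_{\lambda_0}$, and for $n$ large $\lambda_n<\lambda(v_{\lambda_0})$, so the $C^1$ dependence above gives $\hat\Phi_{\lambda_n}\le\psi_{\lambda_n,v_{\lambda_0}}(t_{\lambda_n}^+(v_{\lambda_0}))\to\psi_{\lambda_0,v_{\lambda_0}}(t_{\lambda_0}^+(v_{\lambda_0}))=\hat\Phi_{\lambda_0}$, whence $L=\hat\Phi_{\lambda_0}$.

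Left continuity is the main obstacle. Fix $\lambda_0\in(0,\lambda^*)$, $\lambda_n\uparrow\lambda_0$, and $L=\lim\hat\Phi_{\lambda_n}\le\hat\Phi_{\lambda_0}$; I must show $L\ge\hat\Phi_{\lambda_0}$. Let $u_n=u_{\lambda_n}\in\mathcal{N}_{\lambda_n}^+$ realize $\hat\Phi_{\lambda_n}$. Since the $\lambda_n$ are bounded away from $0$, the coercivity estimate from the proof of Lemma~\ref{variatonal} is uniform in $n$, and $\Phi_{\lambda_n}(u_n)=\hat\Phi_{\lambda_n}\le\hat\Phi_{\lambda_0}$; because $\gamma<4$ this bounds $(u_n)$ in $H_0^1(\Omega)$, so after a subsequence $u_n\rightharpoonup u$ in $H_0^1(\Omega)$ and $u_n\to u$ in $L^\gamma(\Omega)$. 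From $a\|u_n\|^2+\lambda_n\|u_n\|^4=\|u_n\|_\gamma^\gamma$ and the Sobolev embedding one gets $\|u_n\|^{\gamma-2}\ge a/C$ uniformly, so $\|u_n\|_\gamma^\gamma$ stays bounded below and $u\ne 0$. Writing $\Phi_{\lambda_0}(u_n)=\hat\Phi_{\lambda_n}+\frac{\lambda_0-\lambda_n}{4}\|u_n\|^4\to L$, weak lower semicontinuity gives $\Phi_{\lambda_0}(u)\le L$; passing to the limit in $\lambda_n<\lambda(u_n)=C_{a,\gamma}(\|u_n\|_\gamma/\|u_n\|)^{2\gamma/(\gamma-2)}$ with $\|u\|\le\liminf\|u_n\|$ and $\|u_n\|_\gamma\to\|u\|_\gamma$ gives $\lambda_0\le\lambda(u)$, so $\psi_{\lambda_0,u}$ is of type I) or II) of Proposition~\ref{fibering}. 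Finally, from $a\|u_n\|^2+\lambda_0\|u_n\|^4-\|u_n\|_\gamma^\gamma=(\lambda_0-\lambda_n)\|u_n\|^4\to 0$, weak lower semicontinuity of $v\mapsto a\|v\|^2+\lambda_0\|v\|^4$ and weak continuity of $v\mapsto\|v\|_\gamma^\gamma$ yield $\psi_{\lambda_0,u}'(1)=a\|u\|^2+\lambda_0\|u\|^4-\|u\|_\gamma^\gamma\le 0$. Thus there is $s\ge 1$ with $su\in\mathcal{N}_{\lambda_0}^+\cup\mathcal{N}_{\lambda_0}^0$, namely $s=t_{\lambda_0}^+(u)$ when $\psi_{\lambda_0,u}$ is of type I) and $s=1$ (with $u\in\mathcal{N}_{\lambda_0}^0$) when it is of type II), and in either case $\psi_{\lambda_0,u}(s)\le\psi_{\lambda_0,u}(1)$, so
\begin{equation*}
\hat\Phi_{\lambda_0}\le\Phi_{\lambda_0}(su)=\psi_{\lambda_0,u}(s)\le\psi_{\lambda_0,u}(1)=\Phi_{\lambda_0}(u)\le L .
\end{equation*}
Together with $L\le\hat\Phi_{\lambda_0}$ this gives $L=\hat\Phi_{\lambda_0}$ and finishes the proof. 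The hardest point is exactly this last step: since $\lambda_0>\lambda_n$, several weak-convergence inequalities point the ``wrong way'' and must be recovered from the cancellation $a\|u_n\|^2+\lambda_0\|u_n\|^4-\|u_n\|_\gamma^\gamma=(\lambda_0-\lambda_n)\|u_n\|^4\to 0$ and from the fact, established in the first step, that the minimizers lie in $\mathcal{N}_\lambda^+$ and not in $\mathcal{N}_\lambda^0$.
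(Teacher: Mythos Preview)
Your proof is correct and follows essentially the same strategy as the paper: monotonicity via the fiber projection $t_\lambda^+$ applied to the minimizer at the larger parameter, right continuity via the same projection applied to the minimizer at the smaller parameter, and left continuity by extracting a weak limit of the minimizers $u_{\lambda_n}$ and using the fiber structure. The only notable difference is in the left-continuity step: the paper appeals to the contradiction argument of Lemma~\ref{ekeland} to upgrade to strong convergence and land directly in $\mathcal{N}_{\lambda_0}^+$, whereas you work entirely with the weak limit, using $\psi'_{\lambda_0,u}(1)\le 0$ and $\lambda_0\le\lambda(u)$ to locate a point $su\in\mathcal{N}_{\lambda_0}^+\cup\mathcal{N}_{\lambda_0}^0$ with $\Phi_{\lambda_0}(su)\le\Phi_{\lambda_0}(u)\le L$; both routes are valid and close in spirit.
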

\begin{proof} First we prove that $(0,\lambda^*)\ni \lambda\mapsto\hat{\Phi}_\lambda$ is decreasing. Indeed, suppose that $\lambda<\lambda'$. From Lemmas \ref{GMBl0}, \ref{GMB00} and \ref{ekeland}, there exists $u_{\lambda'}$ such that $\hat{\Phi}_{\lambda'}=\Phi_{\lambda'}(u_{\lambda'})$. Since the fiber map $\psi_{\lambda',u_{\lambda'}}$ obvioulsy satisfies I) of Proposition \ref{fibering} it follows from Proposition \ref{fiberingvariation} that $\psi_{\lambda,u_{\lambda'}}$ also satisfies I) of Proposition \ref{fibering} and then	 
	\begin{equation*}
	\hat{\Phi}_{\lambda}\le \Phi_{\lambda}(t_{\lambda}^+(u_{\lambda'})u_{\lambda'})<\Phi_{\lambda}(t_{\lambda'}^+(u_{\lambda'})u_{\lambda'})=\Phi_{\lambda'}(u_{\lambda'})=\hat{\Phi}_{\lambda'}.
	\end{equation*}
	Now we prove that $(0,\lambda^*)\ni \lambda\mapsto\hat{\Phi}_\lambda$ is continuous. In fact, suppose that $\lambda_n\uparrow \lambda\in (0,\lambda^*)$ and choose $u_n\equiv u_{\lambda_n}$ such that $\hat{\Phi}_{\lambda_n}= \Phi_{\lambda_n}(u_n)$ for all $n$. Similar to the proof of Lemma \ref{ekeland} we may assume that $u_n\to u\in\mathcal{N}_\lambda^+$. We claim that $\hat{\Phi}_{\lambda_n}\to \hat{\Phi}_\lambda$ as $n\to \infty$. Indeed, once $(0,\lambda^*)\ni \lambda\mapsto\hat{\Phi}_\lambda$ is increasing, we can assume that $\hat{\Phi}_{\lambda_n}< \hat{\Phi}_\lambda$ for each $n$ and $\hat{\Phi}_{\lambda_n}\to \Phi_\lambda(u) \le\hat{\Phi}_\lambda$ as $n\to \infty$, wich implies that $\Phi_\lambda(u) =\hat{\Phi}_\lambda$.
	
	Now suppose that $\lambda_n\downarrow \lambda\in (0,\lambda^*)$. Once $(0,\lambda^*)\ni \lambda\mapsto\hat{\Phi}_\lambda$ is increasing, we can assume that $\hat{\Phi}_{\lambda_n}> \hat{\Phi}_\lambda$ for each $n$ and $\lim_{n\to \infty}\hat{\Phi}_{\lambda_n}\ge\hat{\Phi}_\lambda$. Choose $u_\lambda$ such that $\hat{\Phi}_\lambda=\Phi_\lambda(u_\lambda)$ and observe that $\hat{\Phi}_\lambda\le \lim_{n\to \infty}\hat{\Phi}_{\lambda_n}\le\lim_{n\to \infty}\Phi_{\lambda_n}(t_{\lambda_n}^+(u_\lambda)u_\lambda)=\hat{\Phi}_\lambda$.
	
\end{proof}
For the next proposition we assume that $u_{\lambda_0^*}$ is given as in Lemma \ref{GMB00} and $t(u_{\lambda_0^*})$ is defined in \eqref{tu}. Observe from Remark \ref{Rm1} that $t_\lambda^+(u_{\lambda_0^*})$ is well defined for each $\lambda\in (0,\lambda^*)$.
\begin{prop}\label{decre} There holds
	\begin{enumerate}
		\item[i)] The function $(0,\lambda^*)\ni \lambda\mapsto t_\lambda^+(u_{\lambda_0^*})$ is decreasing and continuous.
		\item[ii)] The function $(0,\lambda^*)\ni \lambda\mapsto t_\lambda^-(u_{\lambda_0^*})$ is increasing and continuous.
	\end{enumerate} Moreover 
	\begin{equation*}
	\lim_{\lambda\uparrow \lambda^*}t_\lambda^+(u_{\lambda_0^*})=\lim_{\lambda\uparrow \lambda^*}t_\lambda^-(u_{\lambda_0^*})=t(u_{\lambda_0^*}).
	\end{equation*}
\end{prop}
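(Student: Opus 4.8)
The plan is to reduce everything to a one‑variable calculus statement by fixing $u=u_{\lambda_0^*}$ and writing $A=a\|u\|^2>0$, $B=\|u\|^4>0$, $C=\|u\|_\gamma^\gamma>0$. Then the equation $\psi'_{\lambda,u}(t)=0$ for $t>0$ is equivalent, after dividing by $t$, to
\begin{equation*}
A+\lambda B t^{2}=C t^{\gamma-2},
\end{equation*}
and by Proposition \ref{fibering} (case I, which holds here for all $\lambda\in(0,\lambda^*)$ by Remark \ref{Rm1} and Proposition \ref{fiberingvariation}) this equation has exactly two positive roots $t_\lambda^-(u)<t_\lambda^+(u)$. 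First I would isolate $\lambda$: a pair $(t,\lambda)$ with $t>0$ solves the equation iff
\begin{equation*}
\lambda=g(t):=\frac{C t^{\gamma-2}-A}{B t^{2}}=\frac{C}{B}t^{\gamma-4}-\frac{A}{B}t^{-2}.
\end{equation*}
A direct differentiation gives $g'(t)=\frac{(\gamma-4)C}{B}t^{\gamma-5}+\frac{2A}{B}t^{-3}$, which vanishes at exactly one point $t_*$ (since $\gamma-4<0$ forces $g'$ to change sign once from $+$ to $-$ as $t$ increases), where one checks $t_*=t(u)$ from \eqref{tu} and $g(t_*)=\lambda(u)=\lambda(u_{\lambda_0^*})=\lambda^*$ by Remark \ref{Rm1}. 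Hence $g$ is strictly increasing on $(0,t_*]$ and strictly decreasing on $[t_*,\infty)$, with maximum value $\lambda^*$ attained at $t_*=t(u_{\lambda_0^*})$.

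With this picture the proposition is immediate. For $\lambda\in(0,\lambda^*)$ the horizontal line at height $\lambda$ meets the graph of $g$ in exactly two points, one in $(0,t_*)$ and one in $(t_*,\infty)$; the smaller root is $t_\lambda^-(u)$ (it lies on the increasing branch) and the larger is $t_\lambda^+(u)$ (on the decreasing branch) — this is consistent because on the increasing branch $g'>0$ means $\psi''<0$, i.e. a local max, and vice versa, matching the labels in Proposition \ref{fibering}. Since $g$ restricted to $(0,t_*)$ is a strictly increasing homeomorphism onto $(0,\lambda^*)$, its inverse $\lambda\mapsto t_\lambda^-(u)$ is strictly increasing and continuous, proving ii); since $g$ restricted to $(t_*,\infty)$ is a strictly decreasing continuous bijection onto $(0,\lambda^*)$, its inverse $\lambda\mapsto t_\lambda^+(u)$ is strictly decreasing and continuous, proving i). Finally, as $\lambda\uparrow\lambda^*$ both roots are squeezed toward the unique maximizer $t_*$, so
\begin{equation*}
\lim_{\lambda\uparrow\lambda^*}t_\lambda^-(u_{\lambda_0^*})=\lim_{\lambda\uparrow\lambda^*}t_\lambda^+(u_{\lambda_0^*})=t_*=t(u_{\lambda_0^*}),
\end{equation*}
which one can make rigorous either by monotonicity plus a contradiction argument (a limit strictly below, resp. above, $t_*$ would force $\lambda=g(\lim)<\lambda^*$), or directly from continuity of $g$ together with the fact that the only value of $t$ with $g(t)=\lambda^*$ is $t_*$.

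The only genuinely delicate point is the sign analysis of $g'$ and the identification of its unique critical point with $t(u_{\lambda_0^*})$ and of the critical value with $\lambda^*$; everything else is bookkeeping with monotone functions and their inverses. That identification is not really new work, though: it is exactly the content of the first equation in \eqref{extremal} together with \eqref{tu}, \eqref{extrefunction} and Remark \ref{Rm1}, so I would simply quote those. An alternative, slightly slicker route avoiding explicit differentiation is to apply the Implicit Function Theorem to $F(t,\lambda)=A+\lambda Bt^2-Ct^{\gamma-2}$ at a point of $\mathcal N_\lambda^+$ (resp. $\mathcal N_\lambda^-$): there $F_t=\psi''_{\lambda,u}(1)\cdot t\ne 0$ by definition of $\mathcal N_\lambda^\pm$, giving local $C^1$ dependence $t_\lambda^\pm(u)$ of $\lambda$, and $\frac{d}{d\lambda}t_\lambda^\pm(u)=-F_\lambda/F_t=-Bt^2/F_t$, whose sign is $-\mathrm{sign}(\psi''_{\lambda,u})$, i.e. negative on $\mathcal N_\lambda^+$ and positive on $\mathcal N_\lambda^-$; this yields monotonicity on all of $(0,\lambda^*)$, and the limit statement then follows from boundedness (the branches are monotone and bounded, forced into a common limit which must be the unique degenerate point $t(u_{\lambda_0^*})$). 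I expect to present the $g(t)$ argument as the main proof since it also transparently re‑proves that $\lambda^*$ is the relevant supremum for this particular $u$.
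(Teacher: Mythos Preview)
Your proposal is correct. Your alternative route via the Implicit Function Theorem applied to $F(t,\lambda)=A+\lambda Bt^{2}-Ct^{\gamma-2}$ is exactly what the paper does: it writes $\psi'_{\lambda}(t_\lambda)=0$, differentiates implicitly using $\psi''_{\lambda}(t_\lambda)\neq 0$ on $\mathcal{N}_\lambda^{\pm}$, reads off the sign of $dt_\lambda/d\lambda$, and declares the limits ``straightforward from the definitions''. Your main argument, however, is genuinely different and more elementary: by solving for $\lambda=g(t)$ you replace the implicit differentiation by an explicit study of a single-variable function, which makes the global monotonicity of each branch and the squeezing of both branches to $t(u_{\lambda_0^*})$ as $\lambda\uparrow\lambda^*$ completely transparent without any appeal to the IFT. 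The paper's version is terser; yours is more self-contained and in particular gives an honest proof of the limit statement rather than leaving it to the reader.
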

\begin{proof} Indeed, let $t_\lambda\equiv t_\lambda^+(u_{\lambda_0^*})$ and note that $t_\lambda$ satisfies $\psi'_\lambda(t_\lambda)=0$ for each $\lambda\in (0,\lambda^*)$. By implicit differentiation and the fact that $\psi''_\lambda(t_\lambda)>0$, we conclude that $(0,\lambda^*)\ni \lambda\mapsto t_\lambda^+(u_{\lambda_0^*})$ is decreasing and continuous, which proves $i)$ The proof of $ii)$ is similar and the limits
	\begin{equation*}
	\lim_{\lambda\uparrow \lambda^*}t_\lambda^+(u_{\lambda_0^*})=\lim_{\lambda\uparrow \lambda^*}t_\lambda^-(u_{\lambda_0^*})=t(u_{\lambda_0^*}),
	\end{equation*}
	are straightforward from the definitions.
	
\end{proof}

\newpage


\bibliographystyle{amsplain}
\bibliography{Ref}

\providecommand{\bysame}{\leavevmode\hbox to3em{\hrulefill}\thinspace}
\providecommand{\MR}{\relax\ifhmode\unskip\space\fi MR }
\providecommand{\MRhref}[2]{%
  \href{http://www.ams.org/mathscinet-getitem?mr=#1}{#2}
}
\providecommand{\href}[2]{#2}
\begin{thebibliography}{10}

\bibitem{clafra}
C.~O. Alves, F.~J. S.~A. Corr\^ea, and T.~F. Ma, \emph{Positive solutions for a
  quasilinear elliptic equation of {K}irchhoff type}, Comput. Math. Appl.
  \textbf{49} (2005), no.~1, 85--93. \MR{2123187}

\bibitem{wuch}
Ching-yu Chen, Yueh-cheng Kuo, and Tsung-fang Wu, \emph{The {N}ehari manifold
  for a {K}irchhoff type problem involving sign-changing weight functions}, J.
  Differential Equations \textbf{250} (2011), no.~4, 1876--1908. \MR{2763559}

\bibitem{ilyasENMM}
Yavdat Ilyasov, \emph{On extreme values of {N}ehari manifold method via
  nonlinear {R}ayleigh's quotient}, Topol. Methods Nonlinear Anal. \textbf{49}
  (2017), no.~2, 683--714. \MR{3670482}

\bibitem{YaKa}
Yavdat Ilyasov and Kaye Silva, \emph{On branches of positive solutions for
  {$p$}-{L}aplacian problems at the extreme value of the {N}ehari manifold
  method}, Proc. Amer. Math. Soc. \textbf{146} (2018), no.~7, 2925--2935.
  \MR{3787354}

\bibitem{Kir}
G~Kirchhoff, \emph{Mechanik}, Teubner, Leipzig (1883).

\bibitem{Lions}
J.-L. Lions, \emph{On some questions in boundary value problems of mathematical
  physics}, Contemporary developments in continuum mechanics and partial
  differential equations ({P}roc. {I}nternat. {S}ympos., {I}nst. {M}at.,
  {U}niv. {F}ed. {R}io de {J}aneiro, {R}io de {J}aneiro, 1977), North-Holland
  Math. Stud., vol.~30, North-Holland, Amsterdam-New York, 1978, pp.~284--346.
  \MR{519648}

\bibitem{neh1}
Zeev Nehari, \emph{On a class of nonlinear second-order differential
  equations}, Trans. Amer. Math. Soc. \textbf{95} (1960), 101--123.
  \MR{0111898}

\bibitem{neh}
\bysame, \emph{Characteristic values associated with a class of non-linear
  second-order differential equations}, Acta Math. \textbf{105} (1961),
  141--175. \MR{0123775}

\bibitem{poh}
S.~I. Pokhozhaev, \emph{The fibration method for solving nonlinear boundary
  value problems}, Trudy Mat. Inst. Steklov. \textbf{192} (1990), 146--163,
  Translated in Proc. Steklov Inst. Math. {{\bf{1}}992}, no. 3, 157--173,
  Differential equations and function spaces (Russian). \MR{1097896}

\bibitem{gaeka}
Gaetano Siciliano and Kaye Silva, \emph{The fibering method approach for a
  non-linear {S}chr\"odinger equation coupled with the electromagnetic field},
  Publicacions Matem\`atiques (To Appear).

\bibitem{KaAb}
Kaye Silva and Abiel Macedo, \emph{Local minimizers over the {N}ehari manifold
  for a class of concave-convex problems with sign changing nonlinearity}, J.
  Differential Equations \textbf{265} (2018), no.~5, 1894--1921. \MR{3800105}

\bibitem{sunwu}
Juntao Sun and Tsung-fang Wu, \emph{Existence and multiplicity of solutions for
  an indefinite {K}irchhoff-type equation in bounded domains}, Proc. Roy. Soc.
  Edinburgh Sect. A \textbf{146} (2016), no.~2, 435--448. \MR{3475305}

\bibitem{zhangnieto}
Quan-Guo Zhang, Hong-Rui Sun, and Juan~J. Nieto, \emph{Positive solution for a
  superlinear {K}irchhoff type problem with a parameter}, Nonlinear Anal.
  \textbf{95} (2014), 333--338. \MR{3130527}

\bibitem{perzha}
Zhitao Zhang and Kanishka Perera, \emph{Sign changing solutions of {K}irchhoff
  type problems via invariant sets of descent flow}, J. Math. Anal. Appl.
  \textbf{317} (2006), no.~2, 456--463. \MR{2208932}

\end{thebibliography}

\end{document}